\tikzset{
  symbol/.style={
    draw=none,
    every to/.append style={
      edge node={node [sloped, allow upside down, auto=false]{$#1$}}}
  }
}
\theoremstyle{plain}
\newtheorem{thm}{\bf Theorem}[section]
\newtheorem{lemma}[thm]{\bf Lemma}
\newtheorem{cor}[thm]{\bf Corollary}
\newtheorem{prop}[thm]{\bf Proposition}
\theoremstyle{remark}
\newtheorem{example}[thm]{\bf Example}
\newtheorem{remark}[thm]{\bf Remark}
\theoremstyle{definition}
\newtheorem{definition}[thm]{\bf Definition}
\numberwithin{equation}{thm}
\newcommand{\C}{{\mathbb C}}
\newcommand{\bbP}{{\mathbb P}}
\newcommand{\bbQ}{{\mathbb Q}}
\newcommand{\Z}{{\mathbb Z}}
\newcommand{\cG}{{\mathcal G}}
\newcommand{\cL}{{\mathcal L}}
\newcommand{\rE}{{\rm E}}
\newcommand{\rH}{{\rm H}}
\newcommand{\Aut}{{\rm Aut}}
\newcommand{\End}{{\rm End}}
\newcommand{\Hom}{{\rm Hom}}
\DeclareMathOperator{\Sym}{Sym}
\newcommand{\isom}{\cong}
\begin{document}
\title{Algebraicity of Hodge classes on some Generalized Prym Varieties}
\author{Deepam Patel}
\address{Department of Mathematics, Purdue University,
150 N. University Street, West Lafayette, IN 47907, U.S.A.}
\email{patel471@purdue.edu}
\author{Yilong Zhang}
\address{Department of Mathematics, University of Georgia,
110 Carlton St, Athens, GA 30602, U.S.A.}
\email{Yilong.Zhang@uga.edu}

\date{May 22, 2026}
\subjclass[2010]{14C30, 14K22 primary, 14H40 secondary}

\thanks{D.P. would like to acknowledge support from the Simons Foundation}

\begin{abstract}
In this article, we revisit Schoen’s construction of algebraic cycles on certain Prym varieties \cite{Schoen88}. More precisely, we show that these cycles arise naturally from (unramified) geometric class field theory, and use this perspective to prove the algebraicity of certain Hodge classes on some {\it generalized Prym varieties}. 
\end{abstract}
\maketitle

\tableofcontents

\section{Introduction} 
We work over the field of complex numbers, and all varieties will be considered over this field. Given a curve $C$, let $J(C)$ denote the Jacobian variety of $C$ and $\Sym^k(C)$ denote its $k$-th symmetric power. We shall assume that a point $c_0 \in C$ is fixed, and consider the corresponding Abel--Jacobi maps (recalled below).

As an easy consequence of basic covering space theory, one obtains that every abelian cover of smooth projective curves $C\to C'$, with covering group $G$, arises from base change of finite covers of Jacobian varieties. In particular, there is a Cartesian diagram 
 \begin{equation}\label{diagram_intro-GCFT}
\begin{tikzcd}
C \arrow[d] \arrow[r,hookrightarrow]& W\arrow[r]\arrow[d]&A\arrow[d]\\
C' \arrow[r] \arrow[r,hookrightarrow,"\Delta_d"]&\Sym^d(C')\arrow[r,"AJ_d"]& J(C'),
\end{tikzcd}
\end{equation}
where the first map on the bottom row is the `diagonal map', $AJ_d$ is the degree $d$ Abel--Jacobi map (note that this depends on the fixed point $c_0' \in C'$), $A\to J(C')$ is an isogeny of abelian varieties, and the diagram is $G$-equivariant.  We will show how this diagram produces nontrivial algebraic cycles for suitable values of \(d\). The existence of the diagram above can also be viewed as an instance of unramified geometric class field theory for curves, and is valid over arbitrary algebraically closed fields. From this perspective, one has analogous assertions in the {\it ramified} setting, where one considers instead Jacobians with modulus. However, we do not discuss the ramified setting in this article.

It is classical (for example, see \cite{ACGH85}) that for $d$ sufficiently large ($d\ge 2g-1$, where $g=g(C')$), the Abel--Jacobi map
$$AJ_d: \Sym^d(C') \rightarrow J(C')$$
is a projective bundle. On the other hand, if $d=2g-2$ then the Abel--Jacobi map is a $\mathbb P^{g-2}$-bundle over the complement of a point and the fiber over this point is $\mathbb P^{g-1}\cong |K_{C'}|$ coming from the canonical divisor class on $C'$. The cycle $\mathbb P^{g-1}$ defines a class in the middle-dimensional cohomology, and then the pullback of $\mathbb P^{g-1}$ to $\Sym^{2g-2}(C)$ produces interesting algebraic cycles. It turns out that these cycles factor through $W$, an abelian cover of $\Sym^{2g-2}(C')$. 

\subsection{Cohomology on $G$-cover}  The following result describes the cohomology of the abelian cover $W$ of the symmetric product of a curve, and in particular shows the algebraicity of certain Hodge classes. 
\begin{thm}\label{thm:mainthm1}
Let $C\to C'$ be an \'etale $G$-cover of smooth projective curves, where the Galois group $G$ is a finite abelian group. Let $h=2g(C')-2$ and suppose $g(C')\ge 2$. 

\begin{enumerate}

\item There is a natural isomorphism of Hodge structures
\begin{equation}\label{eqn_Wcoho}
    \rH^{n}(W,\bbQ)\cong \begin{cases}
        \rH^{h}(\Sym^h(C'),\bbQ)\oplus U(-\frac{h}{2}),\ \textup{when}\ n=h;\\
        \rH^{n}(\Sym^h(C'),\bbQ),\ \textup{when}\ n\neq h.\\
    \end{cases}
\end{equation}

\item The summand $U(-\frac{h}{2})$ is generated by algebraic classes, and has dimension $|G|-1$.

\item The above isomorphism is also $G$-equivariant, where the $G$-action on $\rH^*(\Sym^h(C'))$ is trivial, and the action on $U$ is isomorphic to the complement of the trivial factor in the regular $G$-representation. In particular, its complexification decomposes as a sum of $\chi$-isotypic components
$$U\otimes \C\cong \bigoplus_{\chi\neq 0}\rH^{h}(W,\C)_{\chi},$$
where each summand is one-dimensional and $\chi$ ranges over all non-trivial characters.

\end{enumerate}
\end{thm}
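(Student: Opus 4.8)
The plan is to realize $W\to\Sym^h(C')$ as a finite \'etale Galois cover with group $G$ and to compare the two cohomologies via the transfer, together with an explicit decomposition of the pushforward along the (twisted) Abel--Jacobi map. By unramified geometric class field theory, the isogeny $A\to J(C')$ attached to the cover $C\to C'$ has Galois group $G$, and base change realizes $\pi\colon W=\Sym^h(C')\times_{J(C')}A\to\Sym^h(C')$ as a finite \'etale $G$-cover together with a $G$-equivariant map $g\colon W\to A$ which is the base change of $AJ_h$. Since $\pi$ is finite \'etale of degree $|G|$, the transfer shows that $\pi^*$ identifies $\rH^*(\Sym^h(C'),\bbQ)$ with the invariants $\rH^*(W,\bbQ)^G$, compatibly with Hodge structures. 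It therefore suffices to compute the non-invariant part $\bigoplus_{\chi\neq 0}\rH^*(W,\bbQ)_\chi$ and to show that it is concentrated in degree $h$, where it equals $U(-\frac{h}{2})$.

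For this I would analyze $g\colon W\to A$ using the structure recalled above: over the $|G|$ points $q_1,\dots,q_{|G|}$ lying above $p=[K_{C'}]$ the fiber is $\mathbb{P}^{g-1}$, while over their complement $g$ is a $\mathbb{P}^{g-2}$-bundle. Proper base change then gives $R^{2i}g_*\bbQ=\bbQ_A(-i)$ for $0\le i\le g-2$, $R^{2(g-1)}g_*\bbQ=\bigoplus_j\bbQ_{q_j}(-(g-1))$ (a skyscraper supported on the $q_j$), and all other $R^qg_*\bbQ$ vanish. The decomposition theorem yields a splitting $Rg_*\bbQ_W\cong\bigl(\bigoplus_{i=0}^{g-2}\bbQ_A(-i)[-2i]\bigr)\oplus\bigl(\bigoplus_j\bbQ_{q_j}(-(g-1))[-2(g-1)]\bigr)$ and degeneration of the Leray spectral sequence. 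Taking cohomology, the ``bundle part'' contributes $\bigoplus_{i=0}^{g-2}\rH^{\bullet-2i}(A,\bbQ)(-i)$, while the skyscrapers contribute $\bigoplus_j\bbQ(-(g-1))$ purely in degree $2(g-1)=h$.

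The key observation is that the $G$-action on the bundle part is trivial: since $A\to J(C')$ is an isogeny, the pullback $\rH^*(J(C'),\bbQ)\to\rH^*(A,\bbQ)$ is an isomorphism onto the $G$-invariants, so $G$ acts trivially on $\rH^*(A,\bbQ)$ and hence on the entire bundle part. Thus all non-invariant cohomology comes from the skyscraper term in degree $h$, where $G$ permutes the classes $[\mathbb{P}^{g-1}_{q_j}]$ simply transitively (the $q_j$ form the fiber of the $G$-torsor $A\to J(C')$ over $p$). This span is the regular representation $\bbQ[G](-\frac{h}{2})$; its invariants are one-dimensional, spanned by $\sum_j[\mathbb{P}^{g-1}_{q_j}]=\pi^*[\mathbb{P}^{g-1}_p]$, and its complement is the desired $U(-\frac{h}{2})$, of dimension $|G|-1$ and isomorphic to the regular representation minus its trivial factor, i.e. $U\otimes\C\cong\bigoplus_{\chi\neq 0}\rH^h(W,\C)_\chi$ with each summand one-dimensional. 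Algebraicity in (2) is then immediate, since these classes are $\bbQ$-combinations of fundamental classes of the algebraic subvarieties $\mathbb{P}^{g-1}_{q_j}\subset W$, of codimension $g-1=\frac{h}{2}$ (which explains the Tate twist); all identifications above are morphisms of Hodge structures, giving (1), and the representation-theoretic bookkeeping gives (3).

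The main obstacle I anticipate is the clean justification of the decomposition of $Rg_*\bbQ_W$ and the degeneration of the Leray spectral sequence, in particular verifying that the only extra cohomology carried by the special fibers is the single top class of $\mathbb{P}^{g-1}$ assembling into a skyscraper, with no further contributions or differentials. This rests on a careful local description of the Abel--Jacobi map near $[K_{C'}]$, or equivalently on a direct appeal to the decomposition theorem together with the fact that, for $g(C')\ge 2$, the points $q_j$ have codimension $\ge 2$ in $A$, so that the intersection complex of the constant sheaf on their complement is again the constant sheaf. The remaining steps (the transfer, the isogeny isomorphism on rational cohomology, and the regular-representation computation) are essentially formal.
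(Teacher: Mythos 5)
Your proposal is correct in overall architecture and reaches the paper's decomposition, but it runs the computation in the opposite direction. You work upstairs: the decomposition theorem applied to the base-changed map $g\colon W\to A$, triviality of the translation action of $G$ on $\rH^*(A,\bbQ)$, and the transfer identification $\rH^*(\Sym^h(C'),\bbQ)\cong \rH^*(W,\bbQ)^G$, so that the non-invariant part is read off from the skyscraper summands. The paper instead works downstairs: it splits $\tilde{\pi}_*\bbQ\cong\bbQ\oplus\tilde{\cL}_{nt}$ on $\Sym^h(C')$, rewrites $R\Gamma(\Sym^h(C'),\tilde{\cL}_{nt})\cong R\Gamma(J(C'),R(AJ_h)_*\bbQ\otimes\cL_{nt})$ by the projection formula, and uses the key vanishing $\rH^*(J(C'),\cL_{\chi})=0$ for $\chi\neq 0$ (rank-one local systems with nontrivial finite monodromy on a torus have no cohomology). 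That vanishing annihilates every contribution of the constant pieces of $R(AJ_h)_*\bbQ$ already at the $E_2$-page, so the Leray spectral sequence has a single nonzero entry, $\rH^0(J(C'),i_*\bbQ|_Z\otimes\cL_{nt})$, and degenerates for trivial reasons; no decomposition theorem, semisimplicity, or discussion of intersection complexes is needed. Your route buys a complete and transparent description of all of $\rH^*(W,\bbQ)$ at once, at the cost of invoking the decomposition theorem (you correctly identify the IC-extension point, resolved since the special points have codimension $g\geq 2$ in $A$); the paper's twisted-coefficient trick isolates exactly the nontrivial isotypic part and makes degeneration automatic. Both arguments share the same geometric input, namely the structure of $R(AJ_h)_*\bbQ$ as constant sheaves in degrees $\leq 2(g-2)$ plus a skyscraper in degree $2(g-1)$.

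There is, however, one genuine gap, in the algebraicity step (2). You assert that the span of the classes $[\bbP^{g-1}_{q_j}]$ ``is the regular representation,'' but simple transitivity of the $G$-action on the cycles does not give linear independence of their cohomology classes: a priori all $|G|$ classes could coincide (each would then be $G$-invariant), their span would be the trivial one-dimensional representation, the projections to the non-invariant summand would vanish, and you could not conclude that $U(-\frac{h}{2})$ is spanned by algebraic classes. In your own framework the issue is visible upon restricting to the special fibers: the bundle summands contribute nothing to the degree-$h$ stalk at $q_j$, so the component of $[Q_j]$ in the skyscraper summand is detected by $[Q_j]|_{Q_{j'}}=\delta_{jj'}\,c_{g-1}(N_{Q_j/W})$, and independence is equivalent to the nonvanishing of the self-intersection $[Q_j]^2$. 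This is exactly the point the paper supplies: the fibers $Q_t$ are mutually disjoint and have nonzero self-intersection, hence their classes are linearly independent, and the orthogonal complement of the diagonal vector $\sum_{t\in G}[Q_t]=\tilde{\pi}^*[Q]$ inside their span realizes $U(-\frac{h}{2})$. Parts (1) and (3) of your argument are unaffected, since they use only the abstract decomposition and the permutation structure of the skyscraper summands, not the cycle classes themselves.
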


On the other hand, the finite cover $C\to C'$ induces the Norm map on Jacobian varieties $J(C)\to J(C')$. The connected component $B$ of the identity in its kernel is an abelian subvariety of $J(C)$ called the \textit{Prym variety}. There is an isogeny of abelian varieties $$J(C) \sim J(C') \times B.$$ At the cohomological level, this corresponds to the decomposition $$\rH^1(J(C),\bbQ)\cong  \rH^1(J(C'),\bbQ) \oplus \rH^1(J(C),\bbQ)_{nt},$$ 
where the summand $\rH^1(J(C),\bbQ)_{nt}$ is naturally identified with $H^1(B,\bbQ)$ and has the property that its complexification is a sum of $\chi$-isotypic components ranging over all non-trivial characters $\chi$ of $G$.

\subsection{Algebraicity of Weil Hodge classes}
Since $G$ is abelian, the group algebra $\bbQ[G]$ is semi-simple and is a product of fields $F_1\times \cdots\times F_r$, where each field $F_i$ corresponds to a rational irreducible representation of $G$. In particular, there is a decomposition of rings
$\bbQ[G]\cong \bbQ\times \bbQ[G]_{nt}$,
where the ring $\bbQ[G]_{nt}$ is the product of fields corresponding to non-trivial representations. 


It follows that the first cohomology group of the Prym variety $\rH^1(B,\bbQ)$ is naturally a $\bbQ[G]_{nt}$-module. In fact, since each non-trivial character will appear exactly $h$ times, $\rH^1(B,\bbQ)$ is a free $\bbQ[G]_{nt}$-module with rank $h$, and the resulting top exterior power 
\begin{equation}\label{eqn_intro_Weil-classes}
    U_{Weil}:=\bigwedge^h_{\bbQ[G]_{nt}}\rH^1(B,\bbQ) \subset \rH^h(B,\bbQ)
\end{equation}
consists of Hodge classes (see Section \ref{sec_prelim} for the proof of these statements). This is analogous to the construction of Weil Hodge classes in the setting of Weil abelian varieties. Note that this is a subspace of dimension $|G|-1$, and is in general not generated by the divisor classes. Our second result is

\begin{thm}\label{thm:mainthm2}
The Hodge classes $U_{Weil}$ \eqref{eqn_intro_Weil-classes} are represented by algebraic cycles. 
\end{thm}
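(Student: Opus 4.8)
The plan is to deduce Theorem~\ref{thm:mainthm2} from Theorem~\ref{thm:mainthm1} by transporting the algebraic classes $U\subset\rH^h(W)$ onto $U_{Weil}\subset\rH^h(B)$ through an algebraic correspondence coming from the diagram~\eqref{diagram_intro-GCFT}. The first step is to make the link between the two spaces explicit at the level of Hodge structures. Writing $\ell_\chi$ for the rank one local system on $C'$ cut out by a nontrivial character $\chi$ of $G$ (so that $f_*\bbQ_C=\bigoplus_\chi\ell_\chi$ and $\rH^1(C',\ell_\chi)=\rH^1(C,\bbQ)_\chi=\rH^1(B,\bbQ)_\chi$), I would use the computation of twisted symmetric-product cohomology: since $\rH^*(C',\ell_\chi)$ is concentrated in degree one of dimension $h$, the induced local system $L_\chi$ on $\Sym^h(C')$ satisfies
\begin{equation}
\rH^h(\Sym^h(C'),L_\chi)\cong\bigwedge\nolimits^h\rH^1(C',\ell_\chi)=\det\nolimits_{\C}\rH^1(B,\C)_\chi ,
\end{equation}
a one-dimensional space, and vanishes in all other degrees. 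The left-hand side is exactly the $\chi$-isotypic piece of $\rH^h(W)$, i.e.\ of $U$, which is algebraic by Theorem~\ref{thm:mainthm1}(2); the right-hand side is precisely the $\chi$-summand of $U_{Weil}\otimes\C$. Summing over $\chi\neq 0$ identifies $U$ and $U_{Weil}$ as (trivial Tate) Hodge structures.

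This identification is not yet enough, since an abstract isomorphism of Hodge structures does not preserve algebraicity; the second step is therefore to realize it by an algebraic cycle. Here the diagram~\eqref{diagram_intro-GCFT} supplies the geometry: the composite $C\hookrightarrow W\to A$ gives a map $\iota\colon C\to A$, and summing via the group law yields $\sigma\colon\Sym^h(C)\to A$; together with the pushforward $f_*\colon\Sym^h(C)\to\Sym^h(C')$ this produces a map $\Sigma\colon\Sym^h(C)\to W=\Sym^h(C')\times_{J(C')}A$. On the other side, the degree $h$ Abel--Jacobi map $a\colon\Sym^h(C)\to J(C)$ followed by the projector $e_{nt}\in\bbQ[G]$ onto the Prym factor lands in $B$. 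I would assemble these into a correspondence $\Gamma$ of the appropriate codimension in $W\times B$ and transport the explicit algebraic cycles of Theorem~\ref{thm:mainthm1} --- the $|G|$ disjoint lifts of $\bbP^{g-1}=|K_{C'}|$ --- through $\Gamma$ to produce algebraic cycles on $B$.

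The hard part is verifying that $\Gamma$ carries $U$ isomorphically onto $U_{Weil}$, equivalently that it is nonzero on each one-dimensional $\chi$-line. The difficulty is structural: under the natural ($G$-equivariant) maps the deck group $G$ acts on the generator of $\rH^h(W)_\chi$ by $\chi$, whereas it acts on $\det_{\C}\rH^1(B,\C)_\chi$ --- the image of $\rH^1(C,\C)_\chi^{\otimes h}$ under cup product --- by $\chi^h$. Hence $U$ and $U_{Weil}$ sit in \emph{different} isotypic components of the ambient cohomology, the correspondence cannot be naively $G$-equivariant, and one must instead trace the determinant generator explicitly through the external products on $\Sym^h(C)$ and through $a^*$ and $(e_{nt})_*$ in order to confirm nonvanishing. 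This is precisely the computation carried out by Schoen (\cite{Schoen88}), here reinterpreted through geometric class field theory; in practice it is cleanest to argue one Galois orbit of characters at a time, over the corresponding cyclotomic field, so that the resulting cycles descend to $\bbQ$.

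Finally, once $\Gamma$ is shown to induce a surjection --- hence, by the equality of dimensions $|G|-1$, an isomorphism --- $U\to U_{Weil}$, the conclusion is immediate: algebraic correspondences send algebraic classes to algebraic classes, so since $U$ is generated by the algebraic cycles of Theorem~\ref{thm:mainthm1}, their images under $\Gamma$ are algebraic cycles on $B$ representing $U_{Weil}$.
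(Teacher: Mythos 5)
Your skeleton is the same as the paper's (pull $U$ back along $\phi\colon\Sym^h(C)\to W$, push forward along the Abel--Jacobi map, project to the Prym), and your twisted-Macdonald identification of $U$ with $U_{Weil}$ as Hodge structures is correct. But at the two points where the argument has real content, your proposal has genuine gaps. First, the nonvanishing of the transport on each $\chi$-line --- which you yourself isolate as ``the hard part'' --- is not proved but deferred to ``the computation carried out by Schoen''. That deferral is inadmissible here: Schoen's computation concerns cyclic covers, and the statement being proved is exactly its generalization to arbitrary abelian $G$. The paper replaces any character-by-character trace with two structural facts that make the nonvanishing automatic: Lemma \ref{lemma_phi*U} (a Schur-lemma argument against Macdonald's decomposition \eqref{eqn_macdonold}, using an auxiliary copy of $G$ inside $G^h$ acting on $C^h$) shows that $\phi^*U$ lands in the summand $\bigwedge^h\rH^1(C,\bbQ)\cong\rH^h(J(C),\bbQ)$, so that $\phi^*U=\Phi^*U^h$; and the Poincar\'e formula $[W_h]=[\Theta]^{g(C)-h}/(g(C)-h)!$ shows that $\Phi_*\Phi^*$ is, up to a nonzero scalar, the Lefschetz power $L^{g(C)-h}$, which is injective by Hard Lefschetz. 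Your observation that $\rH^h(W)_\chi$ is $\chi$-isotypic while $\det_\C\rH^1(B,\C)_\chi$ is $\chi^h$-isotypic is accurate but is not an obstruction: the identification is never required to be $G$-equivariant (indeed $\phi$ intertwines the diagonal $G$-action on $\Sym^h(C)$ with the action through multiplication by $h$ on $W$), and injectivity is supplied by Hard Lefschetz, not by equivariance.

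Second, and more decisively, your degree bookkeeping fails, and repairing it requires an ingredient you never invoke. Your $\Gamma\subset W\times B$ is $h$-dimensional, so $\Gamma_*$ maps $\rH^h(W,\bbQ)$ into $\rH^{2\dim B-h}(B,\bbQ)$, not into $\rH^h(B,\bbQ)$: transporting the cycles $Q_t$ yields algebraic representatives of $L^{\dim B-h}(U_{Weil})\subset\rH^{2\dim B-h}(B,\bbQ)$, the image of $U_{Weil}$ under a power of the Lefschetz operator, and not of $U_{Weil}$ itself. No elementary choice of ``appropriate codimension'' for $\Gamma$ fixes this; to come back down in degree one must know that the inverse Lefschetz operator $L^{-(\dim B-h)}$ is induced by an algebraic correspondence on $B$, which is precisely the standard conjecture of Lefschetz type for abelian varieties, i.e.\ Kleiman's theorem \cite{Kleiman}. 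This is the final and indispensable step of the paper's proof of Theorem \ref{thm:mainthm2}; as written, your closing sentence only establishes algebraicity of the Weil classes in the complementary degree $2\dim B-h$.
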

As Hodge substructures, $U_{Weil}$ is identified with $U$ from \eqref{eqn_Wcoho} by pulling back to $\Sym^h(C)$, applying Abel--Jacobi map, and powers of the Lefschetz operator on $H^*(B,\bbQ)$. The algebraicity follows from the same chain of maps, together with Theorem \ref{thm:mainthm1} and the standard conjecture on abelian varieties \cite{Kleiman}.

In fact, one can associate abelian subvarieties $J(C)_V$ to any rational representation $V$ of $G$, and consider an analogous space of Hodge classes. Theorem \ref{thm:mainthm2} also holds in this setting with essentially the same proof.

\subsection{Relation to Schoen's work}
In the case of cyclic étale covers, after restricting to the rational representation associated with the {\it primitive characters}, Theorem \ref{thm:mainthm2} is due to Chad Schoen \cite{Schoen88}.
In that setting, Schoen showed that there is a canonical abelian subvariety of $B$ corresponding to primitive characters of $\Z/m\Z$. When $m=3$ (resp.  $4$), these give rise to Weil abelian fourfolds with CM field $\bbQ(\mu_3)$ (resp. $\bbQ (i)$) with trivial discriminant. Schoen proved the Hodge conjecture for such fourfolds by specializing to the ones arising from the Prym construction as above. Another proof for the $\bbQ(i)$ case was given by van Geemen \cite{vGQ(i)}. Note that based on the work of Faber and Koike \cite{Faber, Koike}, Schoen's result \cite{Schoen88, Schoen98} shows the Hodge conjecture for Weil abelian fourfolds with CM field $\bbQ(\mu_3)$ and $\bbQ (i)$ and arbitrary discriminant. By recent work of Markman \cite{Markman23, Markman25}, the Hodge conjecture is now known for all Weil abelian fourfolds with CM in other quadratic fields.

The main goal of this paper is to generalize Schoen’s method for constructing algebraic cycles to the case of an abelian cover, that is, an étale cover with abelian Galois group. The results above follow in part Schoen's original method of proof. The main contribution here is to put forth the viewpoint that the required algebraic cycles come from (unramified) geometric class field theory. Specifically, the key statement for the algebraicity of Hodge classes is Theorem \ref{thm:mainthm1}, which in turn follows easily from the geometry of the Abel--Jacobi map. 

It should be noted that even in the cyclic case, Schoen only considers the rational representation associated to primitive characters, whereas here we consider all non-trivial characters. On the other hand, it is possible that Schoen's methods can be adapted to this more general setting. The main goal here is to provide an alternate viewpoint that generalizes and simplifies Schoen's construction of algebraic cycles. Moreover, the geometric class field theory perpective allows one to easily generalize many of the results in this article to the ramified setting, where one works systematically with Jacobians with modulus. This approach also partially generalizes to surfaces (and $\rH^2(S,\bbQ)$ where $S$ is a surface). 

We do not consider the ramified or higher-dimensional cases here, and plan to discuss these in a future paper. Finally, we note that the proofs here adapt easily to the \'etale cohomology setting and Tate classes (though we stick to the complex setting below). \\

{\bf Contents.} In Section \ref{sec_prelim}, we recall various preliminaries including Abel--Jacobi maps, representation theory, and how it leads to the construction of Prym varieties and the Weil Hodge classes. In Section \ref{sec_proof-thm1}, we study the cohomology of the abelian cover $W$ of the symmetric product of a curve and prove Theorem \ref{thm:mainthm1}. In Section \ref{sec_proof-thm2}, we relate the algebraic cycles on the abelian cover $W$ to the cycles on Prym varieties and prove Theorem \ref{thm:mainthm2}. In Section \ref{sec_cyclic}, we recall Schoen's examples in the setting of cyclic covers.\\

\noindent\textbf{Acknowledgement}. The first author thanks Madhav Nori, who first suggested that one ought to try to understand Schoen's paper from the perspective of geometric class field theory. We thank Chad Schoen and Bert van Geemen for helpful correspondence.

\section{Preliminaries}\label{sec_prelim}

\subsection{Geometric Class Field Theory}
In this section, we recall a basic result from geometric class field theory that will be used later. We refer to \cite{Serre-CFT} for details (where far more general results are proved). In fact, in this article, we shall only be concerned with the unramified setting, where the results we need are a simple consequence of covering space theory. These results are well known and are recalled here for the reader’s convenience.

Let $C'$ be a smooth projective curve of genus $g$ over a perfect field $k$. Let $\Sym^d(C')$ denote the $d$-th symmetric power of $C'$, and $C'^{d} \xrightarrow{\pi_d} \Sym^d(C')$ denote the natural projection. Finally, let ${\rm Pic}^0(C')$ denote the usual Picard scheme of degree $0$ line bundles. We assume that a rational point $P \in C'(k)$ is fixed. With this notation, we have the standard morphisms:
$$C' \xrightarrow{\Delta_d} \Sym^d(C') \xrightarrow{AJ_d} {\rm Pic^0}(C'),$$
where the first arrow is given by composing $\pi_d$ with the diagonal map, and the second is the degree $d$ Abel--Jacobi map, which depends on the fixed point $P$. With this notation, one has the following:

\begin{thm}\label{thm:gcft}
\begin{enumerate}
\item Both $\Delta_d$ and $AJ_d$ induce isomorphisms on the abelianizations of geometric \'etale fundamental groups (resp. topological fundamental groups when $k=\C$). Note that both ${\rm Sym}^d(C')$ ($d \geq 2$) and ${\rm Pic^0}(C')$ have abelian \'etale (resp. topological) fundamental groups.
\item In particular, every abelian \'etale covering $C \rightarrow C'$ fits into a commutative Cartesian diagram:
\begin{equation}\label{diagram_GCFT}
\xymatrix{
C  \ar[r] \ar[d] & W  \ar[r] \ar[d] & A \ar[d] \\
C' \ar[r] & \Sym^d(C') \ar[r] & {\rm Pic^0}(C'),}  
\end{equation}
where the vertical maps are all finite \'etale abelian covers, and both $W$ (resp. $A$) are unique up to an isomorphism.
\end{enumerate}
\end{thm}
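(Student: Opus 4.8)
The plan is to reduce both assertions to two facts about fundamental groups. The first is the universal property of the Jacobian: the degree-one Abel--Jacobi map $u\colon C'\to J(C')$, $c\mapsto[c-P]$, is the Albanese morphism and hence induces the abelianization $\pi_1(C')\twoheadrightarrow\pi_1(C')^{\mathrm{ab}}\cong\HH_1(C';\Z)\cong\pi_1(J(C'))$ (topologically); this identification, together with its profinite analogue over perfect $k$, is exactly the unramified reciprocity of \cite{Serre-CFT}. The second is that $\pi_1(\Sym^d(C'))$ is abelian and identified with $\HH_1(C';\Z)$ for $d\ge2$. Granting these, part (1) is formal. Recall that $\Delta_d$ sends $c$ to the divisor $c+(d-1)P$, so that $AJ_d\circ\Delta_d=u$. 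On abelianizations this factors the isomorphism $u_*\colon\pi_1(C')^{\mathrm{ab}}\xrightarrow{\ \sim\ }\pi_1(J(C'))$ as $\pi_1(C')^{\mathrm{ab}}\xrightarrow{\Delta_{d,*}}\pi_1(\Sym^d(C'))\xrightarrow{AJ_{d,*}}\pi_1(J(C'))$; since all three groups are free abelian of rank $2g$, a surjection or injection between any two of them forced by this factorization is automatically an isomorphism, so both $\Delta_{d,*}$ and $AJ_{d,*}$ are isomorphisms on abelianizations.

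The substance is the second fact, which I would obtain from the geometry of $AJ_d$. For $d\ge 2g-1$ the map $AJ_d$ is a Zariski-locally trivial $\mathbb{P}^{d-g}$-bundle over $J(C')$ (see \cite{ACGH85}); the fibre being connected and simply connected, the homotopy exact sequence of the fibration collapses to $\pi_1(\Sym^d(C'))\xrightarrow{\ \sim\ }\pi_1(J(C'))$, so in particular $\pi_1(\Sym^d(C'))$ is abelian. To reach smaller $d$ I would use the closed immersions $\Sym^d(C')\hookrightarrow\Sym^{d+1}(C')$, $D\mapsto D+P$, compatible with the Abel--Jacobi maps, and show they induce isomorphisms on $\pi_1$ for $d\ge2$; equivalently one simply invokes Macdonald's computation $\pi_1(\Sym^d(C'))\cong\HH_1(C';\Z)$ for all $d\ge2$ and notes that $AJ_d$ realizes the natural identification by its compatibility with $u$ through $\Delta_d$.

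The one genuinely geometric point, and the step I expect to be the main obstacle, is to verify that $AJ_{d,*}$ is an isomorphism precisely in the range $g\le d\le 2g-2$ where $AJ_d$ is surjective but no longer a projective bundle, most importantly at $d=h=2g-2$. Here all fibres are complete linear systems $|D|\cong\mathbb{P}^{r(D)}$, hence connected and simply connected, but the fibre dimension jumps from $g-2$ to $g-1$ over the single point given by the canonical class $\kappa$. Over $J(C')\setminus\{\kappa\}$ the map is a $\mathbb{P}^{g-2}$-bundle, so $\pi_1$ of the preimage agrees with $\pi_1(J(C')\setminus\{\kappa\})\cong\pi_1(J(C'))$, the deleted point having real codimension $2g\ge4$. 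It then remains to check that deleting the jump fibre $\mathbb{P}^{g-1}\subset\Sym^{2g-2}(C')$, of complex codimension $g-1$, does not change $\pi_1$: this is automatic for $g\ge3$, and for $g=2$ I would argue directly, using that $\Sym^2(C')$ is the blow-up of $J(C')$ at $\kappa$ (so $\pi_1$ is unchanged). Together with the formal argument of the first paragraph, this also disposes of $\Delta_d$.

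With part (1) in hand, part (2) follows from the Galois theory of finite \'etale covers. The fibre functor identifies the category of finite \'etale covers of a connected base with finite sets carrying a continuous action of its (\'etale, resp.\ topological) fundamental group, and pullback of covers corresponds to restriction of the action along the induced map of fundamental groups. A cover $C\to C'$ fitting into such a diagram necessarily has monodromy factoring through $\pi_1(C')^{\mathrm{ab}}$, which holds for every abelian $G$-cover considered in the sequel; transporting the corresponding finite quotient across the isomorphisms of part (1) produces finite \'etale covers $W\to\Sym^d(C')$ and $A\to J(C')$, and because those isomorphisms are induced by $\Delta_d$ and $AJ_d$ the two squares are Cartesian. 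Uniqueness of $W$ and $A$ up to isomorphism is the uniqueness of a cover with prescribed monodromy. Over $k=\C$ the \'etale statement follows from the topological one by Riemann's existence theorem; over a general perfect field one invokes the \'etale Albanese and the identification of the abelianized \'etale fundamental group of $C'$ with $\pi_1$ of $\Pic^0(C')$ away from the characteristic, i.e.\ \cite{Serre-CFT} again.
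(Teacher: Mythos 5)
Your proposal is correct in substance, but it takes a genuinely more self-contained route than the paper, whose proof of part (1) is essentially by citation --- ``by construction'' for the Abel--Jacobi map and \cite{sympi1} for the fact that $AJ_d$ ($d \geq 2$) induces an isomorphism on abelianized fundamental groups --- and whose proof of part (2) is the same covering-space dictionary you spell out (subgroups of the abelianized $\pi_1$ versus abelian covers; note that, as you correctly observe, ``every \'etale covering'' must be read as ``every abelian covering'' for the diagram to exist). What you add is an actual geometric proof of the key input in exactly the range used later in the paper: for $d \geq 2g-1$ the homotopy exact sequence of the $\mathbb{P}^{d-g}$-bundle, and for $d = 2g-2$ the stratified argument --- a $\mathbb{P}^{g-2}$-bundle over $J(C')\setminus\{\kappa\}$, removal of a point of real codimension $2g \geq 4$ from $J(C')$, removal of the jump fibre $\mathbb{P}^{g-1}$ of complex codimension $g-1 \geq 2$ from $\Sym^{2g-2}(C')$, with the genus-$2$ case handled via the blow-up description of $\Sym^2(C')$. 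This buys independence from \cite{sympi1} precisely where the paper needs the theorem ($d=h=2g-2$), at the cost of still invoking the known computation $\pi_1(\Sym^d(C'))\cong H_1(C',\Z)$ for the intermediate range $2 \leq d \leq 2g-3$, which the statement as written also covers; and for the \'etale assertion in positive characteristic your topological codimension steps should be replaced by Zariski--Nagata purity and \'etale invariance under projective bundles, as your closing appeal to \cite{Serre-CFT} implicitly concedes. Your formal bookkeeping (factoring $u = AJ_d\circ\Delta_d$ and using that a surjective endomorphism of $\Z^{2g}$ is an isomorphism) is sound once the second fact is in hand.

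One discrepancy deserves explicit flagging, and it is to your credit: you take $\Delta_d(c) = c + (d-1)P$, whereas the paper defines $\Delta_d$ as $\pi_d$ composed with the diagonal, i.e.\ $c \mapsto dc$. For the paper's map one has $AJ_d\circ\Delta_d = [d]\circ u$, so $\Delta_{d,*}$ is multiplication by $d$ on $H_1(C',\Z)$ --- injective but not an isomorphism --- and part (1) fails for it as literally stated; worse, pulling an abelian cover back along the diagonal multiplies the monodromy by $d$ (for instance it trivializes a $\Z/m$-cover whenever $m \mid d$), so the left square of \eqref{diagram_GCFT} would not be Cartesian in general. Your choice of $\Delta_d$ is the one actually needed downstream (e.g.\ for the identification $C \cong C'\times_{J(C')} A$ in Section \ref{sec_proof-thm2}), so your silent correction is the right reading of the statement.
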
    
\begin{proof}
For the first part, note that this is by construction in the case of Abel--Jacobi map. We refer to \cite{sympi1} for the analogous result for $AJ_d$ ($d \geq 2$), and the fact that the fundamental group of $\Sym^d(C')$ is abelian. The second part follows from the first as an application of standard covering space theory. Specifically, we have a bijection between subgroups of the abelianization of the fundamental group and abelian covers.
\end{proof}

We also record the following basic fact about the Abel--Jacobi morphism. 
\begin{thm}(cf. \cite{Schwarzenberger63})\label{thm:abeljacobi} Let $C'$ be a smooth projective curve over an algebraically closed field $k$.
\begin{enumerate}
\item For $d \geq 2g-1$, the Abel--Jacobi map $AJ_d$ is a projective bundle.
\item If $d = 2g-2$, then $AJ_d$ is a projective bundle with fibers $\bbP^{g-2}$ over an open subset $\mathcal{U}$, where the complement $Z=J(C')\setminus \mathcal{U}$ is a point. Over this point, the fiber is $\mathbb P^{g-1}$, corresponding to the canonical divisor class.  
\end{enumerate}
\end{thm}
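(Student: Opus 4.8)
The plan is to analyze $AJ_d$ fiberwise and then upgrade the fiberwise description to an honest projective bundle. First I would recall that $\Sym^d(C')$ is the scheme of effective divisors of degree $d$ on $C'$, and that $AJ_d$ sends such a divisor $D$ to the class of $\mathcal{O}_{C'}(D)$, translated by $-dP$ so as to land in $\Pic^0(C')$. Hence, identifying $\Pic^0(C')$ with $\Pic^d(C')$ by this translation, the fiber of $AJ_d$ over a degree-$d$ line bundle $L$ is exactly the set of effective divisors $D$ with $\mathcal{O}(D)\cong L$, i.e. the complete linear system $|L|=\mathbb{P}(H^0(C',L))$, a projective space of dimension $h^0(L)-1$. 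The whole problem thus reduces to controlling $h^0(L)$ as $L$ varies in $\Pic^d(C')$.

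For this I would use Riemann--Roch, $h^0(L)-h^1(L)=d-g+1$, together with Serre duality $h^1(L)=h^0(K_{C'}\otimes L^{-1})$, noting $\deg(K_{C'}\otimes L^{-1})=2g-2-d$. In case (1), the hypothesis $d\ge 2g-1$ forces this degree to be negative, so $h^1(L)=0$ and $h^0(L)=d-g+1$ is constant; every fiber is a $\mathbb{P}^{d-g}$. In case (2), $d=2g-2$ makes $K_{C'}\otimes L^{-1}$ a degree-$0$ bundle, which has a nonzero section precisely when it is trivial, i.e. when $L\cong K_{C'}$. Thus $h^1(L)=0$ and $h^0(L)=g-1$ for all $L\not\cong K_{C'}$, giving $\mathbb{P}^{g-2}$ fibers, whereas over the single point $[K_{C'}]\in\Pic^{2g-2}(C')$ (the point $Z$, transported into $J(C')$ by the translation) one has $h^0(K_{C'})=g$, giving the fiber $\mathbb{P}^{g-1}=|K_{C'}|$. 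This already pins down the set-theoretic fiber structure and identifies $Z$ as a single point.

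The remaining, and genuinely non-formal, step is to promote ``all fibers are projective spaces of dimension $r$'' to ``$AJ_d$ is a Zariski-locally-trivial projective bundle'' over the relevant base ($\Pic^d(C')$ in case (1), the open set $\mathcal{U}=J(C')\setminus Z$ in case (2)). Here I would use a Poincar\'e bundle $\mathcal{P}$ on $C'\times\Pic^d(C')$, which exists because the rational point $P$ furnishes a rigidification, set $E:=\pi_*\mathcal{P}$ for $\pi$ the projection to $\Pic^d(C')$, and invoke the theorem on cohomology and base change. Over the locus where $h^1(L)=0$ the function $L\mapsto h^0(L)$ is constant, so $E$ is locally free of rank $r+1$ there and its formation commutes with base change; the tautological identification of an effective divisor in the class $L$ with a section of $L$ up to scalar then realizes $AJ_d$ over this locus as the projectivization $\mathbb{P}(E)$, yielding the projective bundle structure. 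In case (2) this applies only over $\mathcal{U}$, since $h^0$ jumps at $[K_{C'}]$ (equivalently $R^1\pi_*\mathcal{P}$ is supported there), which is exactly why the bundle structure degenerates over $Z$. The main obstacle is this last base-change and local-freeness step, together with the clean identification $\Sym^d(C')\cong\mathbb{P}(E)$; the fiber-dimension computation itself is routine Riemann--Roch.
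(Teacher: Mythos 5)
Your argument is correct and takes essentially the same route as the paper: the paper's own proof consists exactly of your fiberwise analysis (the fiber of $AJ_d$ over the class of $L$ is the linear system $|L|$, with $\dim|L|$ controlled by Riemann--Roch and Serre duality, and the unique jump at $L\cong K_{C'}$), with the upgrade to a scheme-theoretic projective bundle delegated to the citation \cite{Schwarzenberger63}. Your third paragraph---the Poincar\'e bundle $\mathcal{P}$, the pushforward $E=\pi_*\mathcal{P}$, cohomology and base change over the locus where $h^1$ vanishes, and the identification $\Sym^d(C')\cong\mathbb{P}(E)$ there---is precisely the standard content of that reference, so you have in effect supplied the detail the paper outsources.
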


Note that set-theoretically, this follows from Abel's theorem and the Riemann--Roch theorem: the fiber over $AJ_d(D-dP)$ is the projectivization of the linear system $|D|$ with dimension $\dim|D|-1=d-g-h^1(D)$. Then by Serre duality $h^1(D)=h^0(K_{C'}-D)$, which is zero when $\textup{deg}(D)\ge 2g-1$; when $\textup{deg}(D)=2g-2$, it vanishes if and only if $D$ is the canonical divisor. 

For the rest of the paper, we work with $k=\C$.

\subsection{A Prym type construction}\label{subsec_Prym-by-rep}
Given a finite group $G$, a rational representation $V$ of $G$, and a group homomorphism $G \rightarrow {\rm End}_0(A) := \End(A) \otimes \bbQ$ , one can associate an abelian subvariety $A_V \subset A$. We briefly recall this construction following (\cite{LR}). 

With $G$ as above, consider the corresponding group rings $\bbQ[G]$ and $\C[G]$. Since $\bbQ[G]$ is semi-simple, we may write $$\bbQ[G] = \oplus_{i=1}^s \bbQ[G]e_{V_i}$$ as a direct sum of simple modules where $e_{V_i}$ are central idempotents and $V_1,\ldots,V_s$ are the irreducible rational representations of $G$. As a representation, $\bbQ[G]e_{V_i} \simeq V_i^{m_i}$, where $m_i = \frac{\dim(V'_i)}{s_i}$, $V'_i$ is a complex irreducible representation associated with $V_i$, and $s_i$ is the Schur index (\cite{LR}). One has a similar description of $\C[G]$ in terms of irreducible complex representations. Note that if $G$ is abelian, then $\C[G]$ is a sum of characters, and $s_i = 1$.

If $\alpha \in {\rm End_0}(A)$, then $m\alpha \in {\rm End}(A)$ for some positive integer $m$. We define $A_{\alpha} := \textup{Im}(m\alpha)$ for a minimal such $m$. In particular, we have abelian subvarieties $A_{V_i} \subset A$ associated with each $V_i$ (given by $\textup{Im}(me_{V_i})$ for appropriate $m$). 

\begin{definition}\label{def_A_V}
   If $V$ is a rational representation of $G$, then we may write $V = \sum_i n_iV_i$ as a sum of irreducible representations, and we set $$A_V: = \prod_{n_i > 0} A_{V_i}.$$ 
\end{definition}

\begin{thm}(\cite{LR}, 2.9.1)
\begin{enumerate}
\item Each $A_{V_i}$ is $G$-stable, the $A_{V_i}$ are pairwise orthogonal, and the addition map induces a $G$-equivariant isogeny $\prod A_{V_i} \rightarrow A$.
\item The $G$-action on $A_{V_i}$ is through a multiple of the rational representation $V_i$.
    \end{enumerate}
\end{thm}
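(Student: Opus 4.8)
The plan is to work entirely with the central idempotents $e_{V_i} \in \bbQ[G]$ and their images in $\End_0(A)$, exploiting that they are orthogonal, central, and sum to the identity. First I would record the three algebraic identities holding in the semisimple ring $\bbQ[G]$: namely $e_{V_i} e_{V_j} = \delta_{ij} e_{V_i}$, $\sum_i e_{V_i} = 1$, and $g \cdot e_{V_i} = e_{V_i} \cdot g$ for all $g \in G$ (centrality). Transporting these along the ring homomorphism $\bbQ[G] \to \End_0(A)$, all three relations hold among the corresponding rational endomorphisms of $A$. Recall that $A_{V_i}$ is by definition the image of $m_i e_{V_i}$ for the minimal integer clearing denominators; since multiplication by $m_i$ is an isogeny, $A_{V_i}$ is the abelian subvariety cut out by the rational idempotent $e_{V_i}$, independent of the choice of $m_i$.

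For $G$-stability, I would use centrality directly: for $g \in G$, viewed as an automorphism of $A$, one has $g(A_{V_i}) = g(\mathrm{Im}\, e_{V_i}) = \mathrm{Im}(g\, e_{V_i}) = \mathrm{Im}(e_{V_i}\, g) \subseteq \mathrm{Im}\, e_{V_i} = A_{V_i}$, and since $g$ is an isomorphism this inclusion is an equality. For orthogonality and the isogeny statement, I would invoke the standard fact that a complete system of orthogonal idempotents in $\End_0(A)$ summing to $1$ induces an isogeny decomposition $A \sim \prod_i \mathrm{Im}(e_{V_i})$. Concretely, on $A_{V_i}$ the idempotent $e_{V_i}$ restricts to (a multiple of) the identity while $e_{V_j}$ restricts to zero for $j \neq i$, so any point lying in two distinct $A_{V_i}$ is annihilated up to torsion, forcing each intersection $A_{V_i} \cap A_{V_j}$ to be finite; meanwhile $\sum_i e_{V_i} = 1$ shows the addition map $\prod_i A_{V_i} \to A$ is surjective with finite kernel, hence an isogeny. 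Its $G$-equivariance is immediate from centrality once more.

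For part (2), I would pass to rational homology (equivalently the rational Tate module) and identify the $G$-representation $H_1(A_{V_i}, \bbQ)$ with the summand $e_{V_i} \cdot H_1(A, \bbQ)$ under the isogeny above. Since $e_{V_i}$ is precisely the projector onto the $V_i$-isotypic component of any $\bbQ[G]$-module, this homology is a multiple of $V_i$, which is exactly the assertion.

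The only genuinely delicate point is the bookkeeping around passing from the rational idempotent $e_{V_i} \in \End_0(A)$ to an honest endomorphism and back, i.e. checking that $\mathrm{Im}(e_{V_i})$ is well defined as an abelian subvariety and that the elementary manipulations above survive clearing denominators. This is routine but is where care is required, since the idempotent relations hold only in $\End_0(A)$ and not on the nose in $\End(A)$; everything must be read up to isogeny. As the result is standard and proved in \cite{LR}, I would keep this verification brief.
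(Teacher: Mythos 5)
The paper offers no proof of this statement at all---it is quoted directly from \cite{LR}, 2.9.1---so the relevant benchmark is the standard argument of that reference, and your proposal reproduces it faithfully in outline: transport the orthogonal central idempotents $e_{V_i}$ of $\bbQ[G]$ into $\End_0(A)$, define $A_{V_i}$ as the image after clearing denominators (your observation that the image is independent of the clearing integer is correct, since multiplication by a nonzero integer is surjective on an abelian variety), deduce $G$-stability from centrality, obtain the isogeny $\prod_i A_{V_i} \to A$ from $\sum_i e_{V_i} = 1$ together with $e_{V_i}e_{V_j} = \delta_{ij}e_{V_i}$, and read off part (2) on $H_1(\cdot,\bbQ)$, where $e_{V_i}$ is precisely the projector onto the $V_i$-isotypic component of any $\bbQ[G]$-module. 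All of this is correct, and your closing caveat about working up to isogeny is exactly the right bookkeeping to flag.

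The one place where you prove something strictly weaker than what is asserted is the phrase \emph{pairwise orthogonal}. You read it as ``pairwise complementary,'' i.e. $A_{V_i}\cap A_{V_j}$ finite, which your idempotent computation does deliver. But in \cite{LR} orthogonality is meant with respect to a polarization: one fixes a $G$-invariant polarization on $A$ (obtained by averaging any polarization over $G$), notes that the associated Rosati involution restricts to $g\mapsto g^{-1}$ on the image of $\bbQ[G]$, and checks that each $e_{V_i}$ is symmetric under this involution because rational characters satisfy $\chi_{V_i}(g^{-1})=\chi_{V_i}(g)$; images of symmetric, mutually annihilating idempotents are then orthogonal abelian subvarieties in the polarized sense. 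This is a short addition, but without it your argument yields only the isogeny decomposition, not orthogonality. A second, smaller caution: the homomorphism in the paper is $G\to\End_0(A)$, so $g$ is a priori only a unit of $\End_0(A)$ rather than an honest automorphism of $A$; your stability computation $g(A_{V_i})=A_{V_i}$ should therefore be run through the same denominator-clearing step you already describe, rather than literally ``viewing $g$ as an automorphism'' (though in the paper's applications, where $G$ acts on a curve, $g$ genuinely is one).
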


In particular, for any rational representation $V$ of $G$ we obtain a $G$-equivariant isogeny $A_{V^c} \times A_V \rightarrow A$ where $V^c$ is the sum of the irreducible rational representations not appearing in $V$. The following two examples will be useful later.

\begin{example}\label{example_A-from-rep}
\begin{enumerate}
\item Let $V_{nt} = \oplus V_i$ where the sum is over all the non-trivial rational representations. Let $A_{nt} \subset A$ denote the corresponding variety. On cohomology, we have $$\rH^1(A,\bbQ) = \rH^1(A,\bbQ)^{G} \oplus \rH^1(A_{nt},\bbQ),$$
where $\rH^1(A_{nt},\C)$ decomposes as a direct sum of $\chi$-isotypic components of $\rH^1(A,\bbQ)$, where $\chi$ ranges over non-trivial characters of $G$.

\item Let $G$ be cyclic of prime order. Let $V$ be the rational representation whose complexification is the sum of the $\chi$-isotypic components where $\chi$ varies over primitive characters. In this setting, the resulting $A_V$ is what is referred to as the Prym variety in the literature. (Usually, in that setting, $A$ is taken to be the Jacobian of a $G$-covering of some curve.) 
\end{enumerate}
\end{example}
We will investigate these two cases in Sections \ref{subsec_WeilHC-Prym} and \ref{sec_cyclic} for coverings of curves.
\subsection{Weil type classes}\label{subsec:weiltypeclasses}
Suppose a finite group $G$ acts on $A$, and let $\rho: \bbQ[G] \rightarrow \End(A)_{\bbQ}$ denote the corresponding rational representation as above. Then the simple algebras (indexed by the irreducible rational representations $V_i$) appearing in the decomposition of $\bbQ[G]$ are all matrix algebras over division algebras. We denote by $F_i$ the centers of these division algebras (corresponding to the representations $V_i$). In the current setting, this is a number field. Let $R = F_1 \times \cdots \times F_k$ and $R \rightarrow \End(A)_\bbQ$ be the induced morphism. It follows that $V:= \rH^1(A,\bbQ)$ is a finitely generated $R$-module. We assume that it is a free $R$-module of rank $r$. Let $V_R := \Lambda^r_R V$ denote the resulting rank $1$-module; it is a $\dim_{\bbQ}(R)$-dimensional $\bbQ$-vector space. Note that one has a natural inclusion 
$$ V_R \xhookrightarrow{} \rH^r(A,\bbQ) = \Lambda_{\bbQ}^r V.$$
Let $\Sigma_R := \Hom_{\bbQ}(R,\mathbb{C}) = \Pi_i\Hom_{\bbQ}(F_i,\mathbb{C})$ (here we mean $\bbQ$-algebra homomorphisms), and for $\sigma \in \Sigma_R$ let $n_{\sigma}$ denote the multiplicity of $\sigma$ in the $R$-action on $V_{\mathbb{C}}^{(1,0)}$. We have the following standard criterion for when $V_R$ consists of Hodge classes. 

\begin{lemma}\label{lemma_chi-chi-bar}
If $n_{\sigma} = n_{\bar{\sigma}}$ for all $\sigma \in \Sigma_R$, then $V_R$ consists entirely of Hodge classes.
\end{lemma}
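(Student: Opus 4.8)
The plan is to pass to $\C$-coefficients, split $V=\rH^1(A,\bbQ)$ according to the $F$-action, and then read off the Hodge type of the top exterior power from that splitting. Throughout I keep the weight-one Hodge decomposition $V_\C=V^{(1,0)}\oplus V^{(0,1)}$, and use that the rational endomorphisms $F\hookrightarrow\End(A)_\bbQ$ act on $V$ by morphisms of Hodge structures, hence commute with this decomposition. First I would use $F\otimes_\bbQ\C\cong\prod_{\sigma\in\Sigma_F}\C$ to write $V_\C=\bigoplus_{\sigma\in\Sigma_F}V_\sigma$, where $V_\sigma:=V\otimes_{F,\sigma}\C$ is the subspace on which $F$ acts through $\sigma$; since $V$ is free of rank $r$ over $F$, each $V_\sigma$ has $\dim_\C V_\sigma=r$. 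Because the $F$-action preserves the Hodge decomposition, each $V_\sigma$ splits as $V_\sigma=V_\sigma^{(1,0)}\oplus V_\sigma^{(0,1)}$ with $V_\sigma^{(i,j)}=V_\sigma\cap V^{(i,j)}$, and by definition $n_\sigma=\dim_\C V_\sigma^{(1,0)}$.

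Next I would record the key symmetry coming from complex conjugation. The conjugation $c$ on $V_\C$ induced by the $\bbQ$-structure $V$ commutes with the ($\bbQ$-linear) $F$-action while conjugating the scalar $\sigma(a)\mapsto\overline{\sigma(a)}$; this forces $c(V_\sigma)=V_{\bar\sigma}$. As $c$ interchanges $V^{(1,0)}$ and $V^{(0,1)}$, it restricts to a conjugate-linear isomorphism $V_\sigma^{(1,0)}\xrightarrow{\sim}V_{\bar\sigma}^{(0,1)}$. Comparing complex dimensions gives the identity $n_\sigma+n_{\bar\sigma}=r$ for every $\sigma$, independent of any hypothesis.

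Then I would compute the Hodge type of $V_R=\Lambda^r_F V$. As $V_R$ is free of rank one over $F$, its complexification splits as $V_R\otimes_\bbQ\C=\bigoplus_{\sigma}\Lambda^r_\C V_\sigma$, each summand being the one-dimensional top exterior power of $V_\sigma=V_\sigma^{(1,0)}\oplus V_\sigma^{(0,1)}$. Choosing a basis adapted to this splitting shows $\Lambda^r_\C V_\sigma\subset\Lambda^{n_\sigma}V^{(1,0)}\wedge\Lambda^{\,r-n_\sigma}V^{(0,1)}=\rH^{n_\sigma,\,r-n_\sigma}(A)$, so the line $\Lambda^r_\C V_\sigma$ is of pure Hodge type $(n_\sigma,\,r-n_\sigma)$. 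Invoking the hypothesis $n_\sigma=n_{\bar\sigma}$ together with the automatic relation $n_\sigma+n_{\bar\sigma}=r$ yields $n_\sigma=r/2$ for all $\sigma$ (in particular $r$ is even); hence every summand lies in $\rH^{r/2,\,r/2}(A)$, so $V_R\otimes_\bbQ\C\subset\rH^{r/2,\,r/2}(A)$. Since $V_R$ is a $\bbQ$-subspace of $\rH^r(A,\bbQ)$ whose complexification is purely of type $(r/2,r/2)$, every element of $V_R$ is a Hodge class.

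The main point to get right — and the only genuine subtlety — is the interaction of complex conjugation with the $\sigma$-isotypic decomposition, since this is exactly what produces the relation $n_\sigma+n_{\bar\sigma}=r$ and thereby explains why the seemingly mild hypothesis $n_\sigma=n_{\bar\sigma}$ is enough to force the Hodge type of each line down to the middle $(r/2,r/2)$. The remaining steps are routine bookkeeping with exterior powers and the standard identification $\rH^{p,q}(A)=\Lambda^p V^{(1,0)}\wedge\Lambda^q V^{(0,1)}$.
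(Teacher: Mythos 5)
Your proof is correct and is precisely the standard argument that the paper delegates to its reference \cite{MZ} for the field case, written out in full: the $\sigma$-eigenspace decomposition $V_\C=\bigoplus_\sigma V_\sigma$, the conjugation symmetry $c(V_\sigma)=V_{\bar\sigma}$ forcing the automatic relation $n_\sigma+n_{\bar\sigma}=r$, and the computation that each line $\Lambda^r_\C V_\sigma$ has pure type $(n_\sigma,\,r-n_\sigma)$, hence type $(r/2,r/2)$ under the hypothesis. Your notational use of a single field $F$ is harmless, since $R\otimes_\bbQ\C\cong\prod_{\sigma\in\Sigma_R}\C$ and the freeness of $V$ over $R$ hold equally when $R$ is a product of fields, which is exactly the extension the paper asserts without detail.
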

\begin{proof}
If $R$ is a field, this is well-known (\cite{MZ}). That argument also applies to a more general setting where $R$ is a product of fields as we consider here. 
\end{proof}

In the following, we shall be concerned with the setting where $G$ is abelian. In that case, each $F_i$ is a subfield of the cyclotomic field. If $R =F$ is a field, then since $V_F$ is a rank one $F$-module, it follows that $V_F$ consists entirely of either exceptional Hodge classes, or entirely of those coming from divisors.

\subsection{Local systems on finite \'etale covers}\label{subsec:localsyscovers}
Let $\pi: X \rightarrow X'$ be a finite (Galois) \'etale cover of a complex algebraic variety with Galois group $G$. Then we have the standard short exact sequence of fundamental groups (we ignore base points for ease of notation):
$$1 \rightarrow \pi_1(X) \rightarrow \pi_1(X') \rightarrow G \rightarrow 1.$$

A local system $\cG$ (of $k$-vector spaces, with $k$ a field) on $X$ is equivalent to giving a finite-dimensional $\pi_1(X)$-representation in a $k$-vector space. Let $\rho_{\cG}$ denote this representation. Then the pushforward $\pi_*\cG$ corresponds to the induced representation 
${\rm Ind}_{\pi_1(X)}^{\pi_1(X')} \rho_{\cG}$ (as a consequence of standard covering space theory).

A finite-dimensional representation of $G$ gives rise to a local system on $X'$; in particular, every (complex) character $\chi$ of $G$ gives a (rank one) local system $\cL_{\chi}$ on $X'$. We may also express these constructions in the language of group rings. More precisely, a finite-dimensional $k$-representation $V$ of $\pi_1(X)$ is equivalent to a $k[\pi_1(X)]$-module. We sometimes use the same notation $V$ to denote the corresponding module. For example, the constant local system $k$ is given by the quotient $k[\pi_1(X)]/I$ where $I$ is the augmentation ideal. The induced representation is then given by the tensor product $k[\pi_1(X')] \otimes_{k[\pi_1(X)]} V.$ We have the following standard computation. 

\begin{lemma}\label{lemma_H^i(W)-decomp}
With the same notation as above, assume moreover $G$ is abelian. 
\begin{enumerate}
\item One has a natural isomorphism of local systems on $X'$
\begin{equation}\label{eqn_pi_*C-decomp}
  \pi_*(\mathbb{C}) \isom \oplus_{\chi} \cL_{\chi},  
\end{equation}
where the sum is over all characters of $G$.
\item Taking cohomology, there is a decomposition $$\rH^i(X,\mathbb{C}) \isom \bigoplus_{\chi} \rH^i(X,\mathbb{C})_{\chi},$$ 
where $\rH^i(X,\mathbb{C})_{\chi}\cong \rH^i(X',\cL_{\chi})$ is the $\chi$-isotypic component of the $G$-representation $\rH^{i}(X,\mathbb{C})$.
\end{enumerate}
\end{lemma}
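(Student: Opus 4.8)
The plan is to prove both parts by combining the standard description of pushforward along a Galois cover with the character theory of the abelian group $G$. First I would recall the general fact, valid for any finite Galois cover with group $G$, that $\pi_*(\C)$ carries a $G$-action and that as a $G$-equivariant local system one has $\pi_*(\C) \isom \C[G]$, where $G$ acts on the group algebra by left translation; concretely, the stalk of $\pi_*(\C)$ at a point $x' \in X'$ is the free vector space on the fiber $\pi^{-1}(x')$, which is a $G$-torsor, so the stalk is (non-canonically) isomorphic to $\C[G]$ with its regular representation, and the monodromy action is compatible with this.

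For part (1), I would then use that $G$ is abelian to decompose the regular representation. Since $\C[G] \isom \bigoplus_{\chi} \C_\chi$ as a $G$-representation, where $\chi$ ranges over the characters of $G$ and $\C_\chi$ is the one-dimensional representation on which $g \in G$ acts by $\chi(g)$, this decomposition is canonical precisely because the characters are distinct (the isotypic components are the eigenspaces of the commuting operators). The point is to upgrade this pointwise decomposition to a decomposition of local systems: the monodromy of $\pi_*(\C)$ factors through the $G$-action, so the same idempotents $e_\chi = \frac{1}{|G|}\sum_{g} \chi(g)^{-1} g \in \C[G]$ that split the regular representation act as global endomorphisms of the local system $\pi_*(\C)$, and their images are exactly the rank-one sub-local-systems $\cL_\chi$ corresponding to the characters $\chi$ viewed as one-dimensional representations of $\pi_1(X')$ via $\pi_1(X') \onto G \xrightarrow{\chi} \C^\times$. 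This yields the isomorphism \eqref{eqn_pi_*C-decomp}.

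For part (2), I would take cohomology of the isomorphism in part (1). Since $\pi$ is finite, $\pi_*$ is exact, so $\rH^i(X,\C) \isom \rH^i(X', \pi_*\C) \isom \bigoplus_\chi \rH^i(X', \cL_\chi)$, where the decomposition is compatible with the $G$-action. The $\chi$-isotypic component of $\rH^i(X,\C)$ is by definition the image of the idempotent $e_\chi$, and since cohomology is functorial the idempotent decomposition of the local system induces the idempotent decomposition on cohomology; hence $\rH^i(X,\C)_\chi \isom \rH^i(X', \cL_\chi)$.

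I do not expect a serious obstacle here, as the statement is a packaging of standard facts; the one point requiring care is the passage from a pointwise (stalkwise) decomposition to a decomposition of local systems, which I handle by observing that the splitting is effected by the globally defined idempotents $e_\chi \in \C[G] = \End_G(\pi_*\C)$ acting as endomorphisms of the local system, rather than by choosing a basis fiber by fiber. The exactness of $\pi_*$ for a finite morphism is what makes the descent to cohomology immediate.
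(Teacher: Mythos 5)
Your proof is correct and takes essentially the same approach as the paper: both identify $\pi_*\mathbb{C}$ with the regular representation $\mathbb{C}[G]$, viewed as a $\pi_1(X')$-representation through the quotient $\pi_1(X')\twoheadrightarrow G$, and then invoke the decomposition of the regular representation of a finite abelian group into its characters, with part (2) following by functoriality and the exactness of $\pi_*$ for a finite cover. The only difference is presentational: the paper carries out the identification via the induced-module formula $k[\pi_1(X')]\otimes_{k[\pi_1(X)]}k[\pi_1(X)]/I\cong k[G]$ from its local-systems dictionary, whereas you globalize a stalkwise identification using the deck-transformation idempotents $e_\chi$ --- a valid, slightly more hands-on treatment of the same point.
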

\begin{proof}
By the remarks above, the push-forward corresponds to the module $\mathbb{C}[\pi_1(X')] \otimes_{\mathbb{C}[\pi_1(X)]} \mathbb{C}[\pi_1(X)]/I.$ On the other hand, the latter is easily seen to be isomorphic to $\mathbb{C}[G]$ (as a $\mathbb{C}[\pi_1(X')]$-module). Finally, note that $\mathbb{C}[G]$ (as a $\mathbb{C}[G]$-module) is a direct sum of the characters of $G$. This follows from the standard fact that the regular representation of a finite abelian group is the direct sum of all the one-dimensional representations, each appearing with multiplicity one. The second part is a direct consequence of the first part, since the $\chi$-isotypic component is precisely $\rH^{i}(X',\cL_{\chi}).$
\end{proof}

\begin{remark}
We make a clarifying remark with regard to the {\it naturality} of the isomorphism above. Given the following Cartesian square.
\begin{figure}[ht]
    \centering
\begin{equation}
\begin{tikzcd}
Y \arrow[r]\arrow[d,"\pi_Y"]&  X\arrow[d,"\pi_X"]\\
Y' \arrow[r,"f"]&X'.
\end{tikzcd}
\end{equation}
\end{figure}
The decompositions \eqref{eqn_pi_*C-decomp} for $\pi_X$ and $\pi_Y$ are compatible with the pullback along $f$.
\end{remark}

\subsection{The group action on a curve} We specialize to the setting of an \'etale abelian cover of curves $\pi: C \rightarrow C':= C/G$. As an application of the results of the previous section, we have 
$$\rH^1(C,\C) = \bigoplus_{\chi} \rH^1(C',\cL_{\chi}),$$
where $\rH^1(C',\cL_{\chi})$ is the $\chi$-isotypic component (cf. Lemma \ref{lemma_H^i(W)-decomp}). We are interested in determining the multiplicity of this component, i.e. the multiplicity of $\chi$ in the $G$-representation $\rH^1(C,\mathbb{C})$. It is well known (see \cite[Lemma 1.5]{Schoen88}) that this is precisely $h:= 2g-2$ where $g$ is the genus of $C'$. Here we give a simple direct proof (without recourse to the Lefschetz fixed point formula as in \cite[Lemma 1.5]{Schoen88}).

\begin{lemma}\label{lem:chimultiplicity}
With notation as above, $\dim\rH^1(C',\cL_{\chi}) = 2g - 2$, if $\chi$ is not trivial. 
\end{lemma}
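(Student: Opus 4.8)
The plan is to compute $\dim \rH^1(C',\cL_{\chi})$ from the Euler characteristic of $\cL_{\chi}$ together with the vanishing of its degree-$0$ and degree-$2$ cohomology. The key observation is that Euler characteristics of local systems are insensitive to the coefficients in the sense that $\chi(C',\cL) = \operatorname{rank}(\cL)\cdot \chi_{\mathrm{top}}(C')$. Concretely, I would fix a finite triangulation (or CW structure) of $C'$; the cellular cochain complex computing $\rH^*(C',\cL_{\chi})$ then has terms of the same dimensions as in the constant-coefficient case, since $\cL_{\chi}$ has rank one, so the alternating sum is unchanged. This gives
\[
\chi(C',\cL_{\chi}) = \chi_{\mathrm{top}}(C') = 2-2g.
\]

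Next I would show $\rH^0(C',\cL_{\chi})=0$. Global sections of a local system are exactly its flat (monodromy-invariant) sections, i.e. the $\pi_1(C')$-invariants of the underlying one-dimensional representation. That representation factors as $\pi_1(C') \twoheadrightarrow G \xrightarrow{\chi} \C^{\times}$, where the first map is the surjection defining the cover. Since $\chi$ is nontrivial, the composite is a nontrivial character of $\pi_1(C')$, hence has no nonzero invariants, and $\rH^0(C',\cL_{\chi})=0$.

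For the top degree I would invoke Poincar\'e duality on the closed oriented surface $C'$, which for twisted coefficients reads $\rH^2(C',\cL_{\chi}) \cong \rH^0(C',\cL_{\chi}^{\vee})^{*}$, where $\cL_{\chi}^{\vee}\cong \cL_{\chi^{-1}}$ is the local system attached to the inverse (dual) character. As $\chi^{-1}$ is again nontrivial, the previous step yields $\rH^0(C',\cL_{\chi^{-1}})=0$, so $\rH^2(C',\cL_{\chi})=0$. Combining the three computations,
\[
0 - \dim\rH^1(C',\cL_{\chi}) + 0 = 2-2g,
\]
which gives $\dim\rH^1(C',\cL_{\chi}) = 2g-2$, as claimed.

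The argument is elementary, and the only points needing care are the two standard inputs for local systems: the rank-one Euler characteristic identity and Poincar\'e duality with twisted coefficients. I expect the $\rH^2$-vanishing to be the step most worth stating precisely, as it is where duality and the passage $\chi \mapsto \chi^{-1}$ enter, whereas the other two steps are immediate. As a consistency check, summing over all characters recovers $\dim\rH^1(C,\C)=2g(C)$ through $\rH^1(C,\C)=\bigoplus_{\chi}\rH^1(C',\cL_{\chi})$ and Riemann--Hurwitz: the trivial character contributes $2g$ and the remaining $|G|-1$ nontrivial characters contribute $2g-2$ each, giving $2g + (|G|-1)(2g-2) = |G|(2g-2)+2 = 2g(C)$.
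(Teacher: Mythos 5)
Your proposal is correct and follows essentially the same route as the paper: compute $\chi(C',\cL_{\chi})=2-2g$ from the rank-one Euler characteristic identity, then kill $\rH^0$ and $\rH^2$ using nontriviality of the monodromy (your Poincar\'e-duality step with $\cL_{\chi^{-1}}$ just makes explicit what the paper asserts in one line). The added consistency check via $\rH^1(C,\C)=\bigoplus_{\chi}\rH^1(C',\cL_{\chi})$ and Riemann--Hurwitz is a nice touch but not a departure from the paper's argument.
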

\begin{proof}
First, it follows from standard topology that the Euler characteristic of a complex local system $\cL_{\chi}$ with rank one is the same as the topological Euler characteristic $\chi(C',L_{\chi})=\chi(C')=2-2g$. Then since $\cL_{\chi}$ has non-trivial monodromy, $\dim\rH^0(C',\cL_{\chi}) = \dim\rH^2(C',\cL_{\chi}) = 0$, hence the result follows.
\end{proof}

Now, we may split the local system into $\pi_*\bbQ = \bbQ \oplus \cL_{nt}$. Recall that the local system $\cL_{nt}$ corresponds to the sum of the non-trivial irreducible rational representations of $G$, and each of these has a field $F_i$ associated with it. Let $\bbQ[G]_{nt}$ denote the product of these fields. Then $\cL_{nt}$ has an action of $\bbQ[G]_{nt}$, and an induced action on $\rH^{1}(C',\cL_{nt}) = \rH^1(C,\bbQ)_{nt}$. The cohomology group $\rH^1(C,\bbQ)_{nt}$ is naturally a $\bbQ[G]_{nt}$-module. As a consequence of Lemma \ref{lem:chimultiplicity}, we have
\begin{cor}\label{cor:freemodule}
With notation as above, $\rH^1(C,\bbQ)_{nt}$ is a free $\bbQ[G]_{nt}$-module of rank $h$. 
\end{cor}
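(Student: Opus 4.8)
The plan is to exploit the decomposition of $R := \bbQ[G]_{nt}$ as a product of fields and reduce the freeness claim to a single dimension count in each factor. Write $R = F_1 \times \cdots \times F_r$, where $F_i$ is the field attached to the nontrivial rational irreducible representation $V_i$, and let $e_{V_i} \in \bbQ[G]$ be the corresponding central idempotent. Setting $M := \rH^1(C,\bbQ)_{nt}$, the idempotents induce an $R$-module decomposition $M = \bigoplus_{i=1}^r M_i$ with $M_i := e_{V_i}M$ a module over $F_i$, i.e.\ an $F_i$-vector space. Since a module over a product of fields is just a product of vector spaces over the individual factors, $M$ is free of rank $h$ over $R$ precisely when $\dim_{F_i} M_i = h$ for every $i$. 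I would stress that this uniformity across factors is genuinely the point: modules over $\prod_i F_i$ are automatically projective but \emph{not} in general free, so the content of the corollary lies entirely in seeing that the rank is the same $h$ in each factor.

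First I would compute $\dim_{F_i} M_i$ by extending scalars to $\C$. For abelian $G$ the representation $V_i$ corresponds to a single $\Gal(\bar{\bbQ}/\bbQ)$-orbit of complex characters, and $[F_i:\bbQ]$ equals the size of this orbit, the Schur index being $1$ (as already noted in Section \ref{subsec_Prym-by-rep}). Over $\C$ the idempotent $e_{V_i}$ becomes the sum $\sum_{\chi} e_\chi$ of the primitive idempotents $e_\chi$ attached to the characters $\chi$ in this orbit, so that
\[
M_i \otimes_\bbQ \C \;\cong\; \bigoplus_{\chi \in \mathrm{orb}(V_i)} \rH^1(C,\C)_\chi .
\]
By Lemma \ref{lem:chimultiplicity} each isotypic summand $\rH^1(C,\C)_\chi$ has dimension $h = 2g-2$, and there are $[F_i:\bbQ]$ of them, whence $\dim_\bbQ M_i = h\cdot[F_i:\bbQ]$. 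Combining this with $\dim_\bbQ M_i = [F_i:\bbQ]\,\dim_{F_i} M_i$ forces $\dim_{F_i} M_i = h$.

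With the dimension count in hand the conclusion is immediate: each $M_i$ is free of rank $h$ over $F_i$, hence $M = \bigoplus_i M_i$ is free of rank $h$ over $R = \prod_i F_i$, which is the assertion of Corollary \ref{cor:freemodule}.

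The one step I would be most careful about is the passage from the rational idempotent $e_{V_i}$ to the sum of complex character idempotents over the Galois orbit, together with the identification of the orbit size with $[F_i:\bbQ]$; this is where the abelianness of $G$ (and the vanishing of the Schur index) is used. Everything else is bookkeeping, with Lemma \ref{lem:chimultiplicity} supplying the uniform multiplicity $h$ for every nontrivial character $\chi$. It is exactly this uniformity---the same $h$ for all characters, and hence within each orbit---that upgrades the a priori module structure into genuine freeness.
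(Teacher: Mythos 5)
Your proof is correct and takes essentially the same approach as the paper: the paper deduces the corollary directly from Lemma \ref{lem:chimultiplicity} (every nontrivial character occurs with the same multiplicity $h$), and your argument simply spells out the bookkeeping the paper leaves implicit --- decomposing $\bbQ[G]_{nt}$ into its field factors $F_i$ via central idempotents, matching each factor with a Galois orbit of complex characters, and noting that the uniform multiplicity forces rank $h$ in every factor. Your observation that this uniformity is precisely what upgrades an a priori projective module over a product of fields to a free one is exactly the content underlying the paper's one-line deduction.
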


\begin{remark}\label{rem:freemodulegeneral}
In fact, the analog of the previous lemma is true for any rational $G$-representation. More precisely, let $\rH^1(C,\bbQ)_V$ denote the $V$-isotypic component. Then $V$ is a sum of a collection of irreducible rational representations, and therefore one has a ring $R_V$ associated to $V$ (i.e. the product of fields corresponding to the irreducible summands appearing in $V$). Then $\rH^1(C,\bbQ)_V$ is a free $R_V$-module of rank $h$.
\end{remark}

\subsection{Weil Hodge classes on Prym variety}\label{subsec_WeilHC-Prym}
In this section, we will study Example \ref{example_A-from-rep} (1) in the case where the abelian variety is $J(C)$, and the representation is $V_{nt}$. We work in the same setting as before, where $C\to C'$ is an \'etale $G$-cover of curves. The {\it Prym variety} is the connected component containing the identity of the kernel of the Norm map 
\begin{equation}\label{eqn_Prym}
    B:=\ker(\textup{Nm}:J(C)\to J(C'))_0.
\end{equation}
With notation in Definition \ref{def_A_V}, this is $J(C)_{nt}$. We have an identification $\rH^1(B,\bbQ)\cong \rH^1(J(C),\bbQ)_{nt}$. Recall that this is an $h$-dimensional free module over the ring $\bbQ[G]_{nt}$ by Corollary \ref{cor:freemodule}. Hence, the top exterior power 
\begin{equation}\label{eqn_UWeil}
    U_{Weil} := \bigwedge^h_{\bbQ[G]_{nt}} \rH^1(B,\bbQ)\subseteq \rH^h(B,\bbQ)
\end{equation} 
defines a subspace which is
\begin{itemize}
    \item $|G|-1$ dimensional over $\bbQ$, and
    \item consists of Hodge classes (cf. Lemma \ref{lemma_chi-chi-bar}).
\end{itemize}

We can describe the complex vector space $U_{Weil}\otimes \C$ as follows. For each nonzero character $\chi$, we choose a basis $v_1,\ldots,v_h\in \rH^1(C,\C)_{\chi}$ in the $\chi$-isotypic component, then $v_1\wedge\cdots\wedge v_h\in U_{Weil}\otimes \C$. When $\chi$ runs through all nonzero characters, such vectors form a basis of $U_{Weil}\otimes \C$ (note that a wedge product $v_1^{\chi_1}\wedge\cdots\wedge v_h^{\chi_h}$ is zero, where $v_i^{\chi_i}\in \rH^1(C,\C)_{\chi_i}$ nonzero, unless $\chi_1=\cdots=\chi_h$).

We will refer to the classes given by \eqref{eqn_UWeil} as Weil Hodge classes and denote this subspace by $U_{Weil}$. We may consider this a subspace of the cohomology of ${\rm Sym}^h(C)$ by pulling back via the composition of Abel--Jacobi map $\Phi:\Sym^h(C)\to J(C)$ and the projection $J(C)\to B$
$$\rH^h(B)\to \rH^h(J(C),\bbQ)\xrightarrow{\Phi^*} \rH^h(\Sym^h(C),\bbQ).$$

Using the fact that there is an isomorphism $\Lambda^hH^1(C,\bbQ)\cong H^h(J(C),\bbQ)$, and a theorem of MacDonald (\cite{Macdonald}) that there is an isomorphism of Hodge structures $$\rH^h({\rm Sym}^hC,\bbQ)\cong \bigoplus_{0\le i\le h/2}\bigwedge^{h-2i} \rH^1(C,\bbQ),$$ we may view 
$U_{Weil}$ as a Hodge substructure of the factor $\Lambda^h \rH^1(C,\bbQ)$ of $H^h(\Sym^h(C),\bbQ)$. 

\section{Cohomology on Abelian Cover of Symmetric Product}\label{sec_proof-thm1}
\subsection{Set-up}
Recall that we work with an unramified Galois cover $C\to C'$ of smooth projective curves with Galois group $G$ being abelian. We are interested in computing the cohomology of an \'etale $G$-cover of $\Sym^h(C')$, where $h=2g(C')-2$ is the degree of the canonical divisor, and also equals the multiplicity of each nontrivial character of \(G\) in \(\rH^1(C,\C)\) (cf. Lemma \ref{lem:chimultiplicity}).

By Theorem \ref{thm:gcft}, any \'etale $G$-cover of $\Sym^h(C')$ comes from a Cartesian square:
\begin{equation}\label{diagram_W-A}
\xymatrix{
W \ar[r] \ar[d]^{\tilde{\pi}} & A \ar[d]^{\pi}\\
{\rm Sym}^h(C') \ar[r]^{AJ_h} & J(C').}  
\end{equation}

Since $A$ is an abelian variety isogenous to $J(C')$, its cohomology (over $\bbQ$) will be isomorphic to that of $J(C')$. The rest of the section is devoted to computing the cohomology of $W$ and proving the first main theorem.
\subsection{Proof of Theorem \ref{thm:mainthm1}} 
We split the proof into three propositions. We assume that the base curve has genus $g(C')\ge 2$, since there is no non-trivial unramified cover of a genus zero curve; and an unramified cover of a genus one curve still has genus one, hence the Prym variety is trivial.
\begin{prop} (cf. Theorem \ref{thm:mainthm1}, (1) )\label{prop_Theorem1(1)}
There is a natural isomorphism of Hodge structures
\begin{equation}\label{eqn_prop3.1}
    \rH^{n}(W,\bbQ)\cong \begin{cases}
        \rH^{h}(\Sym^h(C'),\bbQ)\oplus U(-\frac{h}{2}),\ \textup{when}\ n=h;\\
        \rH^{n}(\Sym^h(C'),\bbQ),\ \textup{when}\ n\neq h,\\
    \end{cases}   
\end{equation}
where $U$ consists of Hodge classes.
\end{prop}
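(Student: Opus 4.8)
The plan is to decompose the cohomology of $W$ into $G$-isotypic components, identify the trivial component with the cohomology of $\Sym^h(C')$, and compute each nontrivial component by pushing forward along the Abel--Jacobi map, exploiting the vanishing of the cohomology of nontrivial characters on the abelian variety $J(C')$.

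First I would apply Lemma \ref{lemma_H^i(W)-decomp} to the $G$-cover $\tilde\pi\colon W\to\Sym^h(C')$, obtaining a $G$-equivariant decomposition $\rH^n(W,\bbC)\cong\bigoplus_\chi \rH^n(\Sym^h(C'),\cL_\chi)$. Since $G$ acts on the smooth projective variety $W$ by automorphisms, this refines to a decomposition of rational Hodge structures: the trivial component is $\tilde\pi^*\rH^n(\Sym^h(C'),\bbQ)=\rH^n(W,\bbQ)^G$, and the nontrivial part $\rH^n(W,\bbQ)_{nt}$ is the complementary Hodge substructure cut out by the idempotent of $\bbQ[G]_{nt}$. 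It then remains to show that $\rH^n(W,\bbQ)_{nt}=0$ for $n\ne h$, and that it is $(|G|-1)$-dimensional and purely of Tate type $(h/2,h/2)$ when $n=h$.

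By the naturality of the decomposition under the Cartesian square \eqref{diagram_W-A} (the remark following Lemma \ref{lemma_H^i(W)-decomp}), each $\cL_\chi$ on $\Sym^h(C')$ is the pullback $AJ_h^*\cL_\chi'$ of the corresponding rank-one local system on $J(C')$, which is nontrivial for $\chi\ne 0$. I would then run the Leray spectral sequence $E_2^{p,q}=\rH^p(J(C'),R^q(AJ_h)_*AJ_h^*\cL_\chi)\Rightarrow \rH^{p+q}(\Sym^h(C'),AJ_h^*\cL_\chi)$ and apply the projection formula $R^q(AJ_h)_*AJ_h^*\cL_\chi\cong \cL_\chi\otimes R^q(AJ_h)_*\bbC$. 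The key geometric input is Theorem \ref{thm:abeljacobi}: over $\mathcal{U}=J(C')\setminus Z$ the map $AJ_h$ is a $\bbP^{g-2}$-bundle, while over the single point $Z$ the fibre jumps to $\bbP^{g-1}$. Using a global relative hyperplane class on $\Sym^h(C')$ (the standard divisor restricting to the hyperplane generator on every linear-system fibre) together with proper base change, I would identify $R^q(AJ_h)_*\bbC$ as the constant sheaf $\bbC$ for even $q$ with $0\le q\le 2g-4$, as the skyscraper $i_*\bbC$ at $Z$ for $q=2g-2$, and as $0$ otherwise; the skyscraper arises precisely because the top cohomology of the jump fibre $\bbP^{g-1}$ has no counterpart over $\mathcal{U}$. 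Since $\rH^p(J(C'),\cL_\chi)=0$ for every nontrivial character on the abelian variety $J(C')$, all rows carrying a factor of $\cL_\chi$ contribute nothing, and the only surviving term is $E_2^{0,2g-2}=\rH^0(Z,\bbC)=\bbC$. Hence $\rH^n(\Sym^h(C'),AJ_h^*\cL_\chi)$ is one-dimensional for $n=h=2g-2$ and vanishes otherwise; summing over the $|G|-1$ nontrivial characters yields the asserted dimension.

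Finally, to pin down the Hodge type I would observe that the surviving classes come from the top cohomology of the jump fibre: as $\bbP^{g-1}$ is simply connected, $\tilde\pi^{-1}(\bbP^{g-1})$ is a disjoint union of $|G|$ copies of $\bbP^{g-1}$ inside $W$, and the fundamental classes of these $(g-1)$-dimensional subvarieties are algebraic of type $(g-1,g-1)=(h/2,h/2)$. Their nontrivial isotypic combinations span $\rH^h(W,\bbQ)_{nt}$, which is therefore $U(-h/2)$ with $U$ a trivial weight-zero Hodge structure, as claimed. I expect the main obstacle to be the sheaf-level computation of $R^q(AJ_h)_*\bbC$ across the jump point $Z$ — in particular justifying constancy of the lower cohomology sheaves via the global relative hyperplane class and isolating the skyscraper in top relative degree. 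Once this is in place, the vanishing $\rH^*(J(C'),\cL_\chi)=0$ collapses the spectral sequence immediately, and the explicit algebraic generators make the resulting isomorphism one of Hodge structures.
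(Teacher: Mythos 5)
Your proposal is correct and follows essentially the same route as the paper: the $G$-isotypic decomposition of $\tilde{\pi}_*\bbQ$, the pullback of the character local systems from $J(C')$ via naturality, the projection formula and Leray spectral sequence, the structure of $AJ_h$ from Theorem \ref{thm:abeljacobi}, and the vanishing of $\rH^*(J(C'),\cL_\chi)$ for $\chi\neq 0$, leaving only the skyscraper term in total degree $h$. The differences are cosmetic: the paper works with the rational local system $\cL_{nt}$ in the derived category of mixed Hodge modules, so the Tate twist on $U$ comes out formally, whereas you work character-by-character over $\C$, justify constancy of the lower direct images via the global relative hyperplane class (a detail the paper leaves implicit in its distinguished-triangle argument), and pin down the Hodge type using the algebraic classes of the $|G|$ components over the canonical fiber, which anticipates the paper's Proposition \ref{prop_Theorem1(2)} (where the needed linear independence of those classes is checked via disjointness and nonzero self-intersection).
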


\begin{proof} We break up the proof in a few steps.
\begin{enumerate}
\item The direct image local system
$\pi_*\bbQ$ is a polarized variation of Hodge structure (of weight zero). We have a decomposition of this into a direct sum of local systems on $J(C')$ indexed by the irreducible rational representations of $G$. We may write this as $\bbQ \oplus \cL_{nt}$. (Note that this decomposition underlies variations of Hodge structures.) Here $\cL_{nt}$ is the rational local system which corresponds to the complement of the trivial representation. We have a similar decomposition $\tilde{\pi}_*\bbQ\cong \bbQ\oplus \tilde{\mathcal{L}}_{nt}$ on $\Sym^h(C')$. Moreover, by naturality, the pullback of $\cL_{nt}$ along $AJ_h$ is the corresponding local system on ${\rm Sym}^h(C')$, i.e., $AJ_h^*\mathcal{L}_{nt}=\tilde{\mathcal{L}}_{nt}$. Hence, we have an isomorphism
\begin{equation}\label{eqn_RGammaW-tilde}
    R\Gamma(W,\bbQ) \isom R\Gamma({\rm Sym}^h(C'),\bbQ) \oplus R\Gamma({\rm Sym}^h(C'),AJ_h^*\cL_{nt})
\end{equation}
in the (derived) category of mixed Hodge modules. 
\item We are interested in computing the second factor of \eqref{eqn_RGammaW-tilde}. By the projection formula, we have 
$$R\Gamma({\rm Sym}^h(C'),AJ_h^*\cL_{nt}) \isom R\Gamma(J(C'), (AJ_h)_*\bbQ \otimes \cL_{nt}).$$
\item Let $\mathcal{U} \xhookrightarrow{j} J(C')$ be the complement of the point $Z \xhookrightarrow{i} J(C')$ (as in Theorem \ref{thm:abeljacobi}). In particular, over $Z$, the morphism $AJ_h$ has fiber $\bbP^{g-1} = \bbP^{\frac{h}{2}}$ and over $\mathcal{U}$ it is a projective bundle of rank $g-2$. Let $\cG := (AJ_h)_*\bbQ$, then we have the distinguished triangle:
$$Rj_!\cG|_\mathcal{U} \rightarrow \cG \rightarrow i_*i^*\cG\xrightarrow{[1]} $$
By proper base change and the projective bundle formula, $\cG|_\mathcal{U} = \oplus_{j=0}^{g-2} \bbQ|_{\mathcal{U}}[-2j](-j).$
Similarly, we have $\cG|_Z = \oplus_{j=0}^{g-1} \bbQ|_{Z}[-2j](-j).$ 
Since both $j_!$ and $i^*$ are exact functors, taking the long exact sequence in cohomology for the above triangle gives short exact sequences:
$$0 \rightarrow j_!\bbQ|_{\mathcal{U}}(-k) \rightarrow R^{2k}(AJ_{h})_*\bbQ \rightarrow i_*\bbQ|_{Z}(-k) \rightarrow 0,$$
for $0 \leq k \leq g-2$. The higher direct images vanish in odd degrees since the stalks are cohomology of the fibers which are projective spaces. In the top degree, one has $$R^{2(g-1)}(AJ_{h})_*\bbQ \isom i_*\bbQ|_Z(-(g-1)).$$
\item Combining everything, we have 
\begin{equation} \label{eqn_AJ-pushforward}
R^{2k}(AJ_h)_*\bbQ  \isom 
\begin{cases}
   \bbQ(-k),\  \textup{for}\  0 \leq k \leq g-2, \\
    i_*\bbQ|_{Z}(-(g-1)),\ \textup{for}\ k=g-1,
\end{cases}  
\end{equation}
 and zero in other degrees.
\item  Note that 
\begin{equation}\label{eqn_J(C')torsor}
    \rH^k(J(C'),\cL_{nt}) = 0,\  \textup{for all}\ k.
\end{equation}
This is because after tensoring with $\mathbb{C}$, $\cL_{nt}\otimes \C$ splits as a sum of rank one local systems corresponding to the non-trivial characters of $G$. Each of these has finite monodromy given by roots of unity. On a torus, such local systems have vanishing cohomology. (Note this is consistent with the fact $\pi$ is an isogeny, and hence an isomorphism on cohomology.)

\item We have the Leray spectral sequence converging to $\rH^{k}({\rm Sym}^h(C'),\tilde{\mathcal{L}}_{nt})$, with $$\rE_{2}^{p,q} = \rH^p(J(C'),R^q(AJ_h)_*\bbQ \otimes \cL_{nt}).$$ By \eqref{eqn_AJ-pushforward} and \eqref{eqn_J(C')torsor}, all $\rE_2^{p,q}$ terms vanish except for $p=0$ and $q=2(g-1)$. It is precisely 
$\rH^0(J(C'), (i_*\bbQ|_Z)(-(g-1)) \otimes \cL_{nt}) =: U(-(g-1))$. It follows that the second term of \eqref{eqn_RGammaW-tilde} is
\begin{equation}\label{eqn_U-by-hypercohomology}
    \rH^h({\rm Sym}^h(C'),\tilde{\mathcal{L}}_{nt})\cong U(-(g-1)),
\end{equation}
and the cohomology vanishes at other degrees. Note that the isomorphism preserves the $G$-action. This completes the proof of the first part of Theorem \ref{thm:mainthm1}.
\end{enumerate}
\end{proof}

\begin{prop}(cf. Theorem \ref{thm:mainthm1}, (2) )\label{prop_Theorem1(2)}
    The Hodge substructure $U\subseteq \rH^{h}(W,\C)$ is spanned by algebraic classes, and has dimension $|G|-1$.
\end{prop}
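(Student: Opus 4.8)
The plan is to produce $|G|$ explicit algebraic cycles on $W$ lying over the special point $Z$, and to show that their classes span $U$ once the $G$-invariant part is removed; their independence will be detected by the intersection pairing.

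\textbf{Geometric input.} Recall that $W = \Sym^h(C')\times_{J(C')} A$ and that, by Theorem \ref{thm:abeljacobi}, the fiber $F := AJ_h^{-1}(Z)$ over the distinguished point $Z$ is the canonical linear system $|K_{C'}| \cong \bbP^{g-1} = \bbP^{h/2}$. Since $\pi\colon A\to J(C')$ is finite \'etale with group $G$, the set $\pi^{-1}(Z)$ consists of $|G|$ points permuted simply transitively by $G$. Hence the preimage of $Z$ under the composite $W\to J(C')$ is
\[
\tilde{\pi}^{-1}(F) \;=\; F\times \pi^{-1}(Z) \;=\; \coprod_{i=1}^{|G|} F_i,
\]
a disjoint union of $|G|$ copies $F_i\cong\bbP^{g-1}$, each mapped isomorphically onto $F$ by $\tilde{\pi}$ and permuted simply transitively by the Galois action on $W$.

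\textbf{From cycles to $U$.} Each $F_i$ has codimension $h-(g-1)=h/2$ in $W$, so $[F_i]\in\rH^{h}(W,\bbQ)$ is algebraic. Granting for the moment that the $[F_i]$ are linearly independent, their span $S:=\sum_i\bbQ[F_i]$ is $G$-stable and isomorphic, as a $\bbQ[G]$-module, to the regular representation. Under the $G$-equivariant splitting $\rH^{h}(W,\bbQ)=\rH^{h}(\Sym^h(C'),\bbQ)\oplus U$ of Proposition \ref{prop_Theorem1(1)}, in which $G$ acts trivially on the first summand and $U$ contains no trivial subrepresentation, the invariant line $\langle\sum_i[F_i]\rangle=\tilde{\pi}^*[F]$ lies in $\rH^{h}(\Sym^h(C'),\bbQ)$, while the complementary subspace $(1-e)S$, with $e=|G|^{-1}\sum_{g\in G}g$, lies in $U$ and has dimension $|G|-1$. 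As $\dim_\bbQ U=|G|-1$ (the rank of $\cL_{nt}$, from the proof of Proposition \ref{prop_Theorem1(1)}), this forces $(1-e)S=U$; and since $(1-e)[F_i]=[F_i]-|G|^{-1}\sum_j[F_j]$ is a rational combination of algebraic classes, $U$ is spanned by algebraic cycles and has the asserted dimension $|G|-1$.

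\textbf{Independence (the crux).} It remains to show that the $[F_i]$ are independent, which I would establish via the intersection pairing on $\rH^{h}(W,\bbQ)$. For $i\neq j$ the subvarieties $F_i$ and $F_j$ are disjoint, so $[F_i]\cdot[F_j]=0$; the Gram matrix is therefore diagonal, and nondegeneracy reduces to the nonvanishing of the self-intersection number $[F_i]^2=\int_{F_i}e(N_{F_i/W})$. Because $\tilde{\pi}$ is \'etale and restricts to an isomorphism $F_i\xrightarrow{\sim}F$, one has $N_{F_i/W}\cong N_{F/\Sym^h(C')}$, so it suffices to compute the normal bundle of $F$ in $\Sym^h(C')$. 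A tangent-space computation at a canonical divisor $D\in F$, based on the sequence $0\to\cO_{C'}\to\cO_{C'}(D)\to\cO_D(D)\to 0$ and Serre duality, identifies the normal space with the hyperplane in $\rH^0(C',K_{C'})^{*}$ annihilating the line $\langle s_D\rangle\subset\rH^0(C',K_{C'})$ defining $D$; globally this is the dual $Q^{\vee}$ of the tautological quotient bundle on $\bbP^{g-1}=\bbP(\rH^0(C',K_{C'}))$. Thus $e(N_{F/\Sym^h(C')})=c_{g-1}(Q^{\vee})=(-1)^{g-1}H^{g-1}$ and $\int_{\bbP^{g-1}}(-1)^{g-1}H^{g-1}=(-1)^{g-1}\neq 0$, so the Gram matrix equals $(-1)^{g-1}I$ and the classes are independent. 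I expect this normal-bundle computation to be the only non-formal step; the remaining ingredients are covering-space bookkeeping and the representation theory of $\bbQ[G]$.
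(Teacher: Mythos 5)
Your proposal is correct and follows essentially the same route as the paper: the paper likewise takes the $|G|$ disjoint components $Q_t \cong \bbP^{g-1}$ of $\tilde{\pi}^{-1}(|K_{C'}|)$, notes that the invariant class $\sum_t [Q_t] = \tilde{\pi}^*[Q]$ lands in the pulled-back summand, and deduces that the complement of this line in ${\rm Span}_{\bbQ}\{[Q_t]\}$ is the $(|G|-1)$-dimensional piece $U$, using disjointness plus nonzero self-intersection for linear independence. The only difference is that where the paper simply asserts the self-intersections are nonzero, you actually verify it via the normal bundle $N_{F/\Sym^h(C')} \cong Q^{\vee}$ and $[F_i]^2 = (-1)^{g-1}$ (a computation that checks out, e.g.\ against the $g=2$ blow-up case), so your write-up supplies a detail the paper leaves implicit.
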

\begin{proof}
From Proposition \ref{prop_Theorem1(1)}, the summand $U$ (up to Tate twist) is identified with $\rH^0\big(J(C'),\, i_* \mathbb{Q}_{|Z} \otimes \cL_{nt}\big)$, where $Z \subset J(C')$ is the point over which the Abel--Jacobi map has fiber
$Q = \mathbb{P}^{g-1}$, and $\cL_{nt}$ corresponds to the non-trivial part of the regular representation of $G$. Since $i_* \mathbb{Q}_{|Z}$ is supported at a point, this space is naturally identified with the stalk of $\cL_{nt}$ at $Z$, hence with $\mathbb{Q}[G]_{nt}$. Therefore, $\dim U = |G| - 1$.

To see algebraicity, note that $\widetilde{\pi} : W \to \mathrm{Sym}^h(C')$ is finite \'etale, so the inverse image of $Q$ is a disjoint union of $|G|$ irreducible components $Q_t$, permuted transitively by $G$. Their cohomology
classes $[Q_t]$ are linearly independent. The $G$-invariant sum $\sum_{t \in G} [Q_t]$ corresponds to the trivial representation, and its orthogonal complement inside
their span identifies with $\mathbb{Q}[G]_{nt}$. By Proposition \ref{prop_Theorem1(1)}, this complement is precisely $U$. Thus $U$ is generated by algebraic classes and has dimension $|G|-1$.
\end{proof}

\begin{example}
    Suppose $g(C')=3$, and $G$ is an abelian group. Then $h=4$. The Abel--Jacobi map $\Sym^4(C')\to J(C')$ is generically a $\mathbb P^1$-bundle, and has special fiber $\mathbb P^2$ over the canonical class.
    In this case, $\rH^4(\Sym^4(C'),\bbQ)\cong \rH^4(J(C'),\bbQ)\oplus \rH^2(J(C'),\bbQ)\oplus \bbQ$, where the last summand is generated by the class of the special fiber. Accordingly, $\rH^4(W,\bbQ)\cong \rH^4(A,\bbQ)\oplus \rH^2(A,\bbQ)\oplus U$ where $U\cong\bbQ^{|G|-1}$ is generated by the preimage of $\mathbb P^2=|K_{C'}|$ modulo the diagonal vector (i.e. $\sum_{t\in G} [Q_t]$ in the notation above).
\end{example}

We now prove the last part of Theorem \ref{thm:mainthm1}.
\begin{prop}
 The isomorphism \eqref{eqn_prop3.1} is $G$-equivariant, and the $G$-action on the summand $$\rH^*(\Sym^h(C'),\bbQ)$$ is trivial. The $G$-action on $U$ is the rational representation whose complexification is the direct sum of all non-trivial characters of $G$ (each appearing with multiplicity one).
\end{prop}
\begin{proof}
The $G$-equivariance follows by keeping track of the morphisms in the proof of Proposition \ref{prop_Theorem1(1)}, which are all $G$-equivariant. The second part follows from the fact that the summand $\rH^*(\Sym^h(C'),\bbQ)$ comes from the pullback and corresponds to the trivial representation factor in the decomposition of $\tilde{\pi}_*\bbQ$. The factor $U$ corresponds to the cohomology of local system $\tilde{\mathcal{L}}_{nt}$, which is identified with $\bbQ[G]_{nt}$, as explained in the proof of Proposition \ref{prop_Theorem1(2)}. The result then follows from the fact that $G$ is abelian and therefore $U\otimes \C\cong \C[G]_{nt}$ decomposes as sums of one-dimensional representations $\oplus_{\chi\neq0}\C[G]_{\chi}$.
\end{proof}

\begin{remark}
In fact, one can explicitly write down the basis of the $\chi$-isotypic component $\rH^h(W,\bbQ)_{\chi}=(U\otimes \C)_{\chi}$ for a given nonzero character $\chi$. We may consider the cohomology class 
\begin{equation}\label{eqn_chi_*Q_0}
    [\chi_*Q_0]:= \sum_{t\in G} \chi(-t)[Q_t].
\end{equation}
Then this element lies in $\rH^{h}(W,\bbQ(\mu_m))$ where $m = |G|$ since the field of definition of $\chi$ is contained in $\bbQ(\mu_m)$. On the other hand, direct computation shows that $s_*[\chi_*Q_0] = \chi(s)\sum_{t\in G}\chi(-st)[Q_{st}]=\chi(s)[\chi_*Q_0]$ for all $s\in G$. Hence, \eqref{eqn_chi_*Q_0} lies in the $\chi$-isotypic component. 
\end{remark}

\begin{example}\normalfont
    Let $G=\Z/3\Z$ be the cyclic group of order 3, then there are two non-trivial characters $\chi_k$, with $k=1,2$ so that $\chi_k(1)=\mu_3^k$, where $\mu_3=\exp(2\pi i/3)$ is the third root of unity. Denote by $Q_0,Q_1,Q_2$ the $G$-orbit of the algebraic cycle $Q_0=\mathbb P^{g-1}$ as above. Then $U\otimes \C$ has $\chi$-isotypic basis  $Q_{\chi_1}=Q_0+\mu_3^{-1}Q_1+\mu_3^{-2}Q_2,$ and $Q_{\chi_2}=Q_0+\mu_3^{-2}Q_1+\mu_3^{-1}Q_2.$ Since $Q_{\chi_1}+Q_{\chi_2}=2Q_0-Q_1-Q_2$ and $\frac{i}{\sqrt{3}}(Q_{\chi_1}-Q_{\chi_2})=-Q_1+Q_2$, we see that their span has rational structure.
\end{example}

\begin{remark}
  Note that in the proofs above we could consider any rational representation of $G$. In particular, one can consider instead $J(C)_{V_i}$ (for each rational representation $V_i$), and analogous subspaces $U_i$. 
      This allows one to obtain a version of \ref{thm:mainthm2} where $J(C)_{nt}$ is replaced by $J(C)_{V_i}$.  
\end{remark}

\section{Algebraic Cycles on Prym Variety}\label{sec_proof-thm2}
In this section, we will show that the Hodge substructure $U$ from Theorem \ref{thm:mainthm1} can be identified with the Weil Hodge structure $U_{Weil}$ in the cohomology of the Prym variety $B$. As a consequence, we find that the algebraic cycles that generate $U$ produce algebraic cycles that generate $U_{Weil}$.

\subsection{A factorization of the Abel--Jacobi map}

 First, we note that diagram \ref{diagram_W-A} extends to the Abel--Jacobi map 
 \begin{equation}\label{eqn_AJ_C}
     \Phi:\Sym^h(C)\to J(C)
 \end{equation}
 for the curve $C$.
\begin{lemma}
 The map $f:\Sym^h(C)\to \Sym^h(C')$  factors through $W$. In particular, we have the following commutative diagram. 
\end{lemma}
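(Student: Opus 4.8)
The plan is to obtain the factorization from the universal property of the Cartesian square \eqref{diagram_W-A}, which presents $W$ as the fibre product $\Sym^h(C')\times_{J(C')}A$. To produce a morphism $\Sym^h(C)\to W$ lifting $f$, it then suffices to exhibit two morphisms out of $\Sym^h(C)$, one to $\Sym^h(C')$ and one to $A$, that become equal after composing down to $J(C')$. The first is $f$ itself, the map on symmetric products sending an effective degree-$h$ divisor on $C$ to its pushforward along $\pi$. The second will be $\bar\pi\circ\Phi$, where $\Phi\colon\Sym^h(C)\to J(C)$ is the Abel--Jacobi map \eqref{eqn_AJ_C} and $\bar\pi\colon J(C)\to A$ is a lift, to be constructed, of the pushforward (norm) homomorphism $\pi_*\colon J(C)\to J(C')$ through the isogeny $A\to J(C')$.

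The key point is the construction of $\bar\pi$. By Geometric Class Field Theory (Theorem \ref{thm:gcft}), the isogeny $A\to J(C')$ is exactly the connected abelian cover of $J(C')$ attached to the $G$-cover $C\to C'$; on homology it corresponds to the finite-index sublattice $\pi_1(A)=\ker\big(H_1(C',\Z)\to G\big)\subseteq H_1(C',\Z)$. On the other hand, the image of $\pi_*\colon H_1(C,\Z)\to H_1(C',\Z)$ is precisely this kernel, since by definition the subgroup cut out by the cover $C\to C'$ is the image of $\pi_1(C)$. Hence $\pi_*\big(H_1(J(C),\Z)\big)\subseteq H_1(A,\Z)$, and by the standard lifting criterion for isogenies of complex tori the homomorphism $\pi_*$ factors uniquely as $J(C)\xrightarrow{\bar\pi}A\to J(C')$ (sending $0\mapsto 0$). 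With the map to $A$ defined as $\bar\pi\circ\Phi$, the required compatibility over $J(C')$ reduces to the identity $\pi_*\circ\Phi=AJ_h\circ f$ of maps $\Sym^h(C)\to J(C')$; this is immediate from the formulas, since for an effective divisor $D$ of degree $h$ on $C$ one has $\pi_*[D-h\,c_0]=[\pi_*D-h\,\pi(c_0)]=AJ_h(f(D))$, provided the fixed base points are chosen compatibly so that $\pi(c_0)=c'$.

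The \textbf{main obstacle} is precisely the existence and naturality of the lift $\bar\pi$, i.e.\ the fundamental-group bookkeeping that identifies $\im(\pi_*)$ with the sublattice defining $A$; everything else is formal. Equivalently, one can bypass the fibre product and argue directly with the topological lifting criterion for the \'etale cover $\tilde{\pi}\colon W\to\Sym^h(C')$: since $g(C')\ge 2$ forces $h=2g(C')-2\ge 2$, both symmetric products have abelian fundamental group, and Theorem \ref{thm:gcft}(1) identifies $\tilde{\pi}_*\pi_1(W)\subseteq\pi_1(\Sym^h(C'))$ with $\pi_*\pi_1(C)\subseteq\pi_1(C')^{\mathrm{ab}}$ under the isomorphism induced by $\Delta_h$; the relation $f\circ\Delta_h^{C}=\Delta_h^{C'}\circ\pi$ then shows $f_*\pi_1(\Sym^h(C))$ lands in this subgroup, so $f$ lifts. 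Either way, once the lift $\Sym^h(C)\to W$ is produced, commutativity of every square in the asserted diagram holds by construction, and uniqueness of the lift follows from the universal property of the fibre product; this same map $\Sym^h(C)\to W$ is what will later be used to transport the algebraic cycles of Theorem \ref{thm:mainthm1} to the Prym variety.
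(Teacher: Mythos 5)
Your proof is correct, and it reaches the factorization by a genuinely different construction at the key step. Like the paper, you reduce everything to the universal property of the Cartesian square \eqref{diagram_W-A}, so the real content is producing a map $\Sym^h(C)\to A$ compatible with $f$ over $J(C')$. The paper does this algebro-geometrically: Theorem \ref{thm:gcft} identifies $C$ with $C'\times_{J(C')}A$, giving a map $C\to A$; summing $h$ copies yields a symmetric map $C^h\to A$ descending to $\Sym^h(C)\to A$, which by the universal property of the Albanese (Abel--Jacobi) map factors as $\Sym^h(C)\to J(C)\to A$; the conclusion is then drawn from the identification $\Sym^h(C')\times_{J(C')}J(C)\cong W\times_A J(C)$. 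You instead construct the same map $J(C)\to A$ transcendentally, lifting the norm homomorphism through the isogeny $A\to J(C')$ by verifying the lattice condition: the image of $\pi_*\colon \rH_1(C,\Z)\to \rH_1(C',\Z)$ equals $\ker\big(\rH_1(C',\Z)\to G\big)=\pi_1(A)$. Your justification of this equality is right, and it is worth noting that it uses abelianness of $G$ (so that the commutator subgroup of $\pi_1(C')$ lies in $\pi_*\pi_1(C)$, whence the image of $\pi_*\pi_1(C)$ in the abelianization is the full kernel). Your base-point normalization $\pi(c_0)=c'$ is exactly the convention needed for $\pi_*\circ\Phi=AJ_h\circ f$, and your $\bar\pi$ is the map $J(C)\to A$ of diagram \eqref{diagram_Wcoh}, whose description as the unique lift of $\mathrm{Nm}$ the paper only records afterwards, in the remark on Stein factorization. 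What each approach buys: the paper's Albanese argument is purely algebro-geometric and works verbatim over any algebraically closed field, in keeping with its remark that the GCFT diagram exists in that generality; your lattice argument is specific to $k=\C$ (which the paper assumes from Section 2 onward) but makes the intermediate homomorphism and its uniqueness completely explicit. One caution about your alternative aside: the diagonal $\Delta_h$ (as defined in the paper, $c\mapsto h\cdot c$) induces multiplication by $h$ on $\rH_1$, so $\Delta_{h*}\pi_1(C)$ is the proper subgroup $h\cdot\rH_1(C,\Z)$ of $\pi_1(\Sym^h(C))$ and the relation $f\circ\Delta_h^C=\Delta_h^{C'}\circ\pi$ alone does not control all of $f_*\pi_1(\Sym^h(C))$; that variant should instead invoke the functorial identification $\pi_1(\Sym^h(X))\cong\rH_1(X,\Z)$, under which $f_*$ is $\pi_*$ and the lifting criterion applies directly. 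Your main argument does not rely on this aside, so the proof stands as written.
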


\begin{figure}[ht!]
    \centering
\begin{equation}\label{diagram_Wcoh}
\begin{tikzcd}
\Sym^h(C)\arrow[d,"\phi"]\arrow[r,"\Phi"]\arrow[bend right=25, swap, "f"]{dd}& J(C)\arrow[d]\arrow[bend left=25, "\textup{Nm}"]{dd}\\
W\arrow[d,"\tilde{\pi}"] \arrow[r] &  A \arrow[d,"\pi"'] \\
\Sym^h(C') \arrow[r,"AJ_h"] & J(C')
\end{tikzcd}
\end{equation}
\end{figure}

\begin{proof}
The lower diagram results from geometric class field theory (Theorem \ref{thm:gcft}) as before. The outer diagram is induced by Abel--Jacobi maps and the $G$-cover $C\to C'$: the map $f$ is induced by functoriality of the symmetric product, and $\textup{Nm}:J(C)\to J(C')$ is the norm map. The Stein factorization of $\textup{Nm}$ induces a morphism of abelian varieties $J(C)\to A$ with connected fibers.

Thus all maps in \eqref{diagram_Wcoh} are defined, and the diagram commutes. It remains to construct $\phi$. Since $W\cong\Sym^h(C')\times_{J(C')}A$, the fiber product $\Sym^h(C')\times_{J(C')} J(C)$ is isomorphic to $W\times_A J(C)$. Hence, by the universal property of the fiber product, there is a map $\Sym^h(C)\to W\times_A J(C)$, and composing with the projection to $W$ yields the desired morphism $\phi$. 
\end{proof}

\subsection{Group actions}
In this section, we describe various groups that act on the diagram \ref{diagram_Wcoh}. These result from the action of $G$ on $C$.

Since $C\to C'$ is an etale cover with deck transformation group $G$, one has $G\subseteq\Aut(C)$. Hence, the automorphism group $\Aut(C^h)$ of the product $C^h$ contains the product $G^h=G\times \cdots\times G$. Note that it also contains the permutation group $\mathfrak{S}_h$. We define the subgroup $N$ of $G^h$ as follows: 
$$N:=\{(t_1,\ldots,t_h)\in G^h\subseteq \Aut(C^h)|\sum_kt_k=0\}.$$
Then $N\trianglelefteq G^h$ is a normal subgroup. Let $H:=N\mathfrak{S}_h$, and $\tilde{H}:=G^h\mathfrak{S}_h$. One checks that there is a diagram \eqref{diagram_W0} of inclusions of normal subgroups (except for $\mathfrak{S}_h\subseteq G$) together with the corresponding quotients of $C^h$.
\begin{figure}[ht]
    \centering
\begin{equation}\label{diagram_W0}
\begin{tikzcd}
\{0\}\arrow[d,symbol=\trianglelefteq] \arrow[r,symbol=\trianglelefteq] &  \mathfrak{S}_h \arrow[d,symbol=\subseteq] &&C^h\arrow[d,"/N"] \arrow[r,"/\mathfrak{S}_h"] \arrow[dr,"/H"]&  \Sym^hC \arrow[d,"\phi"] \\
N\arrow[d,symbol=\trianglelefteq] \arrow[r,symbol=\trianglelefteq] & H\arrow[d,symbol=\trianglelefteq]&&C^h/N\arrow[d,"/G"] \arrow[r,"/\mathfrak{S}_h"] & W\arrow[d,"/G"]\\
 G^h\arrow[r,symbol=\trianglelefteq] & \tilde{H},&& C'^h\arrow[r,"/\mathfrak{S}_h"] & \Sym^h(C').
\end{tikzcd}
\end{equation}
\end{figure}

We note that the diagram \ref{diagram_W0} extends the diagram \ref{diagram_Wcoh} to the left. Note that $\phi:\Sym^h(C)\to W$ is a finite morphism, but is not obtained as a quotient by a group action.



\subsection{Macdonald's formula}
By Macdonald's result \cite{Macdonald}, there is a decomposition
\begin{equation}\label{eqn_macdonold}
    \rH^h(\Sym^hC,\bbQ)\cong \bigoplus_{0\le i\le h/2}\bigwedge^{h-2i}\rH^1(C,\bbQ)(-i),
\end{equation}
where the wedge power is taken over $\bbQ$, and the isomorphism preserves the Hodge structures. 

Since $\phi$ is finite and the Hodge substructure $U$ is algebraic (cf. Theorem \ref{thm:mainthm1}), the pullback $\phi^*U$ is algebraic as well. We now determine in which summand of \eqref{eqn_macdonold} the pullback $\phi^*U$ lies. The result is obtained by keeping track of the various group actions above.
\begin{lemma} \label{lemma_phi*U}
The pullback
$$\phi^*:\rH^h(W,\bbQ)\to \rH^h(\Sym^h(C),\bbQ)$$
maps the Hodge substructure $U$ (cf. Theorem \ref{thm:mainthm1}) into the factor $\bigwedge^h\rH^1(C,\bbQ)$.
\end{lemma}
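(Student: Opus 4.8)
The plan is to pull everything back to the top product $C^h$ and rephrase all three cohomology groups as spaces of invariants under finite groups acting on $\rH^h(C^h,\bbQ)$, where both the Künneth decomposition and the factorwise $G^h$-action are transparent. Concretely, writing $\pi_h\colon C^h\to \Sym^h(C)=C^h/\mathfrak{S}_h$ and $\psi=\phi\circ\pi_h\colon C^h\to W=C^h/H$ for the quotient maps from diagram \eqref{diagram_W0}, the transfer isomorphism (valid over $\bbQ$ since the groups are finite) gives
$$\rH^h(W,\bbQ)=\rH^h(C^h,\bbQ)^{H},\qquad \rH^h(\Sym^h(C),\bbQ)=\rH^h(C^h,\bbQ)^{\mathfrak{S}_h},$$
and, because $\mathfrak{S}_h\subseteq H$ and $\psi^*=\pi_h^*\circ\phi^*$ with $\psi^*,\pi_h^*$ the tautological inclusions of invariants, the map $\phi^*$ is identified with the inclusion $\rH^h(C^h,\bbQ)^{H}\hookrightarrow \rH^h(C^h,\bbQ)^{\mathfrak{S}_h}$. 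Thus it suffices to locate $U$ inside $\rH^h(C^h,\bbQ)$.

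Next I would identify the subspace $U$ through the $G$-action. The deck group of $W\to \Sym^h(C')$ is $G=\tilde{H}/H\cong G^h/N$ via the sum map, and since $N\subseteq H$ acts trivially on $H$-invariants, the residual $G$-action on $\rH^h(C^h,\bbQ)^{H}$ is induced by the factorwise $G^h$-action on $\rH^h(C^h,\bbQ)$. By Theorem \ref{thm:mainthm1}, $U(-\tfrac{h}{2})$ is exactly the sum of the non-trivial $\chi$-isotypic components of this $G$-action, as $\chi$ ranges over non-trivial characters of $G$. Under the Künneth decomposition
$$\rH^h(C^h,\bbQ)=\bigoplus_{a_1+\cdots+a_h=h}\ \rH^{a_1}(C,\bbQ)\otimes\cdots\otimes\rH^{a_h}(C,\bbQ),$$
the group $G^h$ acts factorwise and preserves each summand.

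The key representation-theoretic point is the following. Pulling a non-trivial character $\chi$ of $G=G^h/N$ back to $G^h$ gives the diagonal character $(\chi,\ldots,\chi)$, since $\chi\circ\mathrm{sum}$ sends $(t_1,\dots,t_h)$ to $\prod_k\chi(t_k)$. Hence the $\chi$-isotypic part of $\rH^h(C^h,\bbQ)$ is the tensor product, over the factors, of the $\chi$-isotypic parts of the individual $\rH^{a_k}(C,\bbQ)$. Now $\rH^0(C,\bbQ)$ and $\rH^2(C,\bbQ)$ carry the trivial $G$-representation, so their $\chi$-isotypic parts vanish for $\chi\neq 0$, whereas $\rH^1(C,\bbQ)$ has a non-zero $\chi$-component (of dimension $h$, by Lemma \ref{lem:chimultiplicity}). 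Since every Künneth summand other than $\rH^1(C,\bbQ)^{\otimes h}$ has some index $a_k\in\{0,2\}$, its diagonal-$\chi$ isotypic part is zero. Therefore the entire non-trivial isotypic part, and in particular $U$, is contained in $\rH^1(C,\bbQ)^{\otimes h}$.

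Finally I would conclude. Being contained in $\rH^1(C,\bbQ)^{\otimes h}$ and simultaneously $\mathfrak{S}_h$-invariant, $\phi^*U$ lies in $\bigl(\rH^1(C,\bbQ)^{\otimes h}\bigr)^{\mathfrak{S}_h}$; because $\rH^1$ sits in odd degree, the symmetric-group action carries Koszul signs and this invariant subspace is exactly $\bigwedge^h\rH^1(C,\bbQ)$, which is the $i=0$ summand of Macdonald's decomposition \eqref{eqn_macdonold}. This is the desired factor. I expect the only genuinely delicate step to be the bookkeeping in the second paragraph, namely verifying that the $G$-action defining $U$ really is the factorwise $G^h$-action modulo $N$ and that non-trivial characters of $G$ correspond precisely to the diagonal characters of $G^h$; once this is in place, the vanishing on the off-diagonal Künneth summands is immediate from the triviality of the $G$-action on $\rH^0$ and $\rH^2$.
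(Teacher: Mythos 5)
Your argument is correct, and its skeleton is the same as the paper's: identify $\rH^h(W,\bbQ)$ and $\rH^h(\Sym^h(C),\bbQ)$ with the $H$- and $\mathfrak{S}_h$-invariants of $\rH^h(C^h,\bbQ)$ via diagram \eqref{diagram_W0}, so that $\phi^*$ becomes an inclusion of invariant subspaces; kill every K\"unneth summand containing an $\rH^0$ or $\rH^2$ tensor slot by an isotypic argument using the characterization of $U$ from Theorem \ref{thm:mainthm1}(3); and finish by noting that the $\mathfrak{S}_h$-invariants of $\rH^1(C,\bbQ)^{\otimes h}$ (with Koszul signs, since $\rH^1$ sits in odd degree) give exactly the $i=0$ summand $\bigwedge^h\rH^1(C,\bbQ)$ of \eqref{eqn_macdonold}. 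Where you genuinely differ is the key representation-theoretic step: the paper picks a copy of $G$ inside $G^h$ meant to map isomorphically onto $\tilde H/H$ and then invokes Schur's lemma together with conjugation by permutations to handle the slot where that subgroup acts, whereas you pull a nontrivial character $\chi$ of $G\cong G^h/N$ back along the sum map to the diagonal character $(\chi,\ldots,\chi)$ of $G^h$, which acts nontrivially in \emph{every} tensor slot simultaneously, so the vanishing on all off-diagonal K\"unneth summands is immediate with no permutation bookkeeping. Your variant is in fact more robust: the subgroup the paper writes down, $G_0=\{(g,-g,0,\ldots,0)\}$, has coordinate sum zero, hence lies in $N\subseteq H$ and as literally stated maps trivially to $\tilde H/H$ (presumably $\{(g,0,\ldots,0)\}$ was intended); your diagonal-character formulation sidesteps this slip entirely. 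The one point worth making explicit in your write-up is that the identification of the deck group of $W\to\Sym^h(C')$ with $G^h/N$ is canonical only up to an automorphism of $G$, which is harmless since such an automorphism merely permutes the nontrivial characters, so your isotypic vanishing is unaffected.
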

\begin{proof}
Consider the following diagram coming from diagram \ref{diagram_W0}.
\begin{figure}[ht]
    \centering
\begin{equation}
\begin{tikzcd}
C^h \arrow[r,"/\mathfrak{S}_h"] \arrow[dr,"\tilde{\phi}"']&  \Sym^hC \arrow[d,"\phi"]\\
 & W
\end{tikzcd}
\end{equation}
\end{figure}

Then we note that $\tilde{\phi}^*U$ factors through $\phi^*U$. Since both $W$ and $\Sym^h(C)$ arise from quotients of $C^h$ by the groups $H$ and $\mathfrak{S}_h$, respectively, their cohomology groups are identified with subspaces of $\rH^h(C^h,\bbQ)$ fixed by the corresponding groups. Hence, we have the following commutative diagram of inclusions:

\begin{figure}[ht]
    \centering
\begin{equation}
\begin{tikzcd}
\rH^h(C^h,\bbQ)  &  \rH^h(C^h,\bbQ)^{\mathfrak{S}_h} \arrow[l,hookrightarrow] \\
& \rH^h(C^h,\bbQ)^{H}\arrow[ul,hookrightarrow,"\tilde{\phi}^*"] \arrow[u,"\phi^*",hookrightarrow].
\end{tikzcd}
\end{equation}
\end{figure}

Thus, to determine the image of $\phi^*U$, it suffices to understand the action of the group on each summand of \eqref{eqn_macdonold}.

Consider the subgroup 
$$G_0:=\{(g,-g,0,\ldots,0)\in G^h|g\in G\}$$
of $G^h$, which is isomorphic to $G$. Note that the composition 
$$G_0\to G^h\mathfrak{S}_h\to \tilde{H}/H\cong G$$
is an isomorphism, and the induced $G_0$ action on $U$ is precisely the standard $G$-action.

We will investigate the $G_0$-action on each K\"unneth factor of $\rH^h(C^h,\bbQ)$, which is given by
$$\big (\rH^1(C)\otimes\cdots\otimes \rH^1(C)\big )\oplus \big (\rH^0(C)\otimes \rH^2(C)\otimes \cdots\big )\oplus\cdots.$$

In the first term, the $G_0$ action is faithful, so its induced action on the symmetrization $\wedge^h\rH^1(C,\bbQ)$ is faithful as well. All other factors have at least one copy of $\rH^0(C)$ and $\rH^2(C)$. In these factors, the group $G_0$ or its composition with certain permutations acts trivially. Their symmetrization corresponds to the factors $\wedge^{h-2i}\rH^1(C,\bbQ)$, with $i\ge 0$. Hence, the $G_0$-action on these factors is not faithful. By Schur's lemma, the projection of $\tilde\phi^*U$ to these summands must therefore vanish. Hence, $\tilde\phi^*U$ is contained in $\rH^1(C)^{\otimes h}$. Since $\phi^*U$ lies in the $\mathfrak{S}_h$-invariant part, and the $\mathfrak{S}_h$-invariants in $\rH^1(C,\bbQ)^{\otimes h}$ identify with $\bigwedge^h \rH^1(C,\bbQ)$, it follows that $\phi^*U$ maps into the factor $\bigwedge^h \rH^1(C,\bbQ)$.
\end{proof}

The rest of the section is devoted to proving the second main theorem.

\subsection{Proof of Theorem \ref{thm:mainthm2}}

\begin{thm}
     The Weil Hodge classes $U_{Weil}=\Lambda^h_{R}\rH^1(B,\bbQ) \subset \rH^{h}(B,\bbQ)$ are represented by algebraic cycles. 
\end{thm}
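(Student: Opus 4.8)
The plan is to transport the algebraicity result for $U \subset \rH^h(W,\bbQ)$ (Theorem~\ref{thm:mainthm1}) across to the Weil classes $U_{Weil} = \Lambda^h_R \rH^1(B,\bbQ)$ via a chain of correspondences, identifying $U$ with $U_{Weil}$ as Hodge substructures. The key point is that both spaces live inside $\rH^h(\Sym^h(C),\bbQ)$ after appropriate pullbacks. First I would recall from Lemma~\ref{lemma_phi*U} that $\phi^*U$ lands in the top Künneth/Macdonald factor $\Lambda^h\rH^1(C,\bbQ) \subset \rH^h(\Sym^h(C),\bbQ)$, and that $\phi^*U$ is algebraic since $\phi$ is a finite morphism and $U$ is algebraic. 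On the other side, the Abel--Jacobi map $\Phi:\Sym^h(C)\to J(C)$ induces $\Phi^*:\rH^h(J(C),\bbQ)\xrightarrow{\cong}\Lambda^h\rH^1(C,\bbQ)$ (the standard isomorphism $\Lambda^h\rH^1(C,\bbQ)\cong \rH^h(J(C),\bbQ)$), so the top factor is canonically identified with $\rH^h(J(C),\bbQ)$. Composing with the projection $J(C)\to B$ realizes $U_{Weil}\subset \rH^h(B,\bbQ)$ inside this same factor.

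The heart of the argument is the identification $\phi^*U = \Phi^*(\text{image of } U_{Weil})$ inside $\Lambda^h\rH^1(C,\bbQ)$, up to the Lefschetz operator. Both $U$ and $U_{Weil}$ are characterized purely representation-theoretically: after complexifying, each is the span of the vectors $v_1^\chi\wedge\cdots\wedge v_h^\chi$ as $\chi$ ranges over the nontrivial characters, where $v_1^\chi,\ldots,v_h^\chi$ is a basis of the $h$-dimensional $\chi$-isotypic component $\rH^1(C,\C)_\chi$ (cf. the description of $U_{Weil}\otimes\C$ in Section~\ref{subsec_WeilHC-Prym} and the $\chi$-isotypic description of $U\otimes\C$ in part~(3) of Theorem~\ref{thm:mainthm1}). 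So I would match the two $\chi$-isotypic lines: the proof of Lemma~\ref{lemma_phi*U} shows that the $G_0$-action (isomorphic to the standard $G$-action) on $\Lambda^h\rH^1(C,\C)$ is faithful and its $\chi$-isotypic component in this top factor is exactly the one-dimensional line spanned by $v_1^\chi\wedge\cdots\wedge v_h^\chi$. Since the same line is precisely $(U_{Weil}\otimes\C)_\chi$, and also $(\phi^*U\otimes\C)_\chi$ by $G$-equivariance, the two subspaces coincide character-by-character, hence coincide over $\bbQ$.

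Given this identification, algebraicity transfers directly. Since $\phi$ is finite and $U$ is represented by algebraic cycles on $W$ (Proposition~\ref{prop_Theorem1(2)}), the class $\phi^*U$ is algebraic on $\Sym^h(C)$. Under $\Phi^*$ the top Macdonald factor is the image of $\rH^h(J(C),\bbQ)$, and the projection $J(C)\to B$ followed by the Lefschetz operators on $\rH^*(B,\bbQ)$ relates this to $U_{Weil}\subset\rH^h(B,\bbQ)$. To conclude that the resulting Hodge classes on $B$ are themselves algebraic I would invoke the standard conjecture for abelian varieties (\cite{Kleiman}), which guarantees that the inverse of $\Phi^*$ on the relevant factor, the correspondence induced by $J(C)\to B$, and the Lefschetz operators all act by algebraic correspondences; composing these algebraic correspondences with the algebraic class $\phi^*U$ yields an algebraic representative of $U_{Weil}$.

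The main obstacle will be the precise bookkeeping in the middle step: verifying that under the chain $\phi^*\circ(\text{Macdonald projection})$ and $\Phi^*$ the subspace $U$ maps \emph{onto} the image of $U_{Weil}$ and not merely into a larger space, and controlling the (invertible, but only after applying Lefschetz) relation between $\rH^h(J(C),\bbQ)$ and $\rH^h(B,\bbQ)$. Concretely, one must check that the projector to the $\bbQ[G]_{nt}$-part and the top exterior power $\Lambda^h_{\bbQ[G]_{nt}}$ are compatible with $\Phi^*$ and with the Lefschetz decomposition, so that the single nontrivial isotypic line on each side is matched. Everything else — finiteness of $\phi$, the projection formula, and the invocation of the standard conjecture to upgrade Hodge classes to algebraic cycles — is formal once this matching is in place.
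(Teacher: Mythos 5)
Your proposal tracks the paper's own proof almost step for step: pull $U$ back along the finite map $\phi$, locate $\phi^*U$ in the top Macdonald factor $\Lambda^h\rH^1(C,\bbQ)\cong\rH^h(J(C),\bbQ)$ (Lemma \ref{lemma_phi*U}), identify it with $\Phi^*U^h$ where $U^h=\bigwedge^h_{\bbQ[G]_{nt}}\rH^1(C,\bbQ)_{nt}$, and finish by inverting a Lefschetz operator via the standard conjecture for abelian varieties. Your character-by-character matching of the $\chi$-isotypic lines is in fact more explicit than the paper's rather terse ``hence $\phi^*U$ must agree with $\Phi^*U^h$,'' and is a fine way to close that step.

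The one place your argument would not survive scrutiny as written is the claim that \cite{Kleiman} makes ``the inverse of $\Phi^*$'' algebraic: Kleiman's result covers abelian varieties, and $\Sym^h(C)$ is not one, so the cited standard conjecture says nothing directly about inverting a correspondence whose target is $\rH^h(\Sym^h(C),\bbQ)$. The paper's fix is exactly the bookkeeping you flagged but did not carry out: instead of inverting $\Phi^*$, push the honest cycles $\phi^{-1}(Q_t)$ forward along $\Phi$ (pushforward of cycles is unconditionally algebraic) and use the Poincar\'e formula $[W_h]=\frac{1}{(g(C)-h)!}[\Theta]^{g(C)-h}$ together with the fact that $\Phi$ is birational onto the Brill--Noether locus $W_h$, so that $\Phi_*\Phi^*$ is, up to a nonzero scalar, $L^{g(C)-h}$; this produces $U^{2g(C)-h}\subset\rH^{2g(C)-h}(J(C),\bbQ)$ represented by actual cycles. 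Likewise, your gloss ``the projection $J(C)\to B$'' hides a real step: the paper passes through the isogeny $J(C)\sim J(C')\times B$ and observes that each cycle $\Phi(\phi^{-1}(Q_t))$ maps to a point under $J(C)\to J(C')$, which is what places its class in the K\"unneth component identified with $\rH^{2\dim(B)-h}(B,\bbQ)$ (alternatively one can use that the idempotent $e_{nt}\in\bbQ[G]\subset\End_0(J(C))$ acts by an algebraic correspondence). After these repairs, the standard conjecture is invoked exactly once, to apply $L^{-(\dim(B)-h)}$ on the abelian variety $B$, which is precisely where \cite{Kleiman} does apply. So your outline and identifications are correct and match the paper, but the transfer step must run through $\Phi_*$ and the Poincar\'e formula rather than through an unjustified algebraic inverse of $\Phi^*$.
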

\begin{proof}
First, we have the following commutative diagram of cohomology groups with rational coefficients: 

\begin{figure}[ht!]
    \centering
\begin{equation}
\begin{tikzcd}
\phi^*U\arrow[r,symbol=\subseteq] \arrow[bend left=20,swap]{drrr}& \rH^h(Sym^hC,\bbQ) \arrow[dr,"\Phi_*"] \\ 
U^h\arrow[r,symbol=\subseteq]\arrow[u,"\cong"]\arrow[bend right=13,swap]{rrr} & \rH^h(J(C),\mathbb Q)\arrow[r,"L^{g(C)-h}","\cong"']\arrow[u,"\Phi^*"]&\rH^{2g(C)-h}(J(C),\mathbb Q)\arrow[r,symbol=\supseteq]& U^{2g(C)-h}.
\end{tikzcd}
\end{equation}
\end{figure}

The commutativity of the inner diagram follows from the fact that the Abel--Jacobi image of the $h$-th symmetric product is the Brill--Noether locus $W_h$ (note by Riemann--Roch $g(C)> |G|(g(C')-1)\ge h$ as long as $G$ is non-trivial, hence $\Phi$ is birational onto its image $W_h$), whose cohomology class is given by $\frac{1}{(g(C)-h)!}[\Theta]^{g(C)-h}$, where $[\Theta]$ is the theta divisor inducing the principal polarization on $J(C)$. By the projection formula, the composition $\Phi_*\Phi^*$ is, up to a scalar, given by powers of the Lefschetz operator. Hence, the composite $\Phi_*\Phi^*$ induces an isomorphism.

The outer diagram induces isomorphisms on Hodge substructures, where
\begin{itemize}
    \item $U^h=\bigwedge^h_R \rH^1(C,\bbQ)_{nt}$,
    \item $U^{2g(C)-h}$ is the image of $U^h$ under Lefschetz operator, and  
    \item the pullback $\phi^*U$ (algebraic) Hodge substructure from Lemma \ref{lemma_phi*U} coincides with $\Phi^*U^h$.
\end{itemize}

The last item follows from Lemma \ref{lemma_phi*U} that $\phi^*U$ is contained in the factor $\bigwedge^{h}\rH^1(C,\bbQ)$ of cohomology of the symmetric product. Recall that there is a natural isomorphism 
$$\bigwedge^{h}\rH^1(C,\bbQ)\cong \rH^h(J(C),\bbQ).$$
Hence, $\phi^*U$ must agree with $\Phi^*U^h$.

Next, we prove the following Lemma.
\begin{lemma}
  The pushforward along the Abel--Jacobi map $\Phi_*$ induces an isomorphism of Hodge structures
   $$\phi^*U\cong U^{2g(C)-h},$$
  and the target $U^{2g(C)-h}$ is generated by algebraic cycles.
\end{lemma}
\begin{proof}
  From the proof of Proposition \ref{prop_Theorem1(2)}, $U$ is algebraic and generated by classes of disjoint cycles $Q_t\cong \mathbb P^{g-1}$, with $t\in G$. Hence, the Hodge substructure $\phi^*U$ is algebraic and represented by the preimages $\phi^{-1}(Q_t)$. Therefore, the Abel--Jacobi map images of these cycles represent $U^{2g(C)-h}$.
\end{proof}

We recall that from Section \ref{sec_prelim}, the Prym variety $B$ is the subvariety of $J(C)$ associated with $V_{nt}$, the sum of all non-trivial $G$-representations. Note that $J(C')$ is identified with the abelian subvariety $J(C)$ associated with the trivial representation. It follows that we have an isogeny $\tau$ as in the following diagram, where $p_i$ is the projection to the $i$-th factor.

\begin{figure}[ht]
    \centering
\begin{equation}
\begin{tikzcd}
J(C) \arrow[r,symbol=\sim,"\tau"] &  J(C')\times B \arrow[r,"p_1"]\arrow[d,"p_2"]&J(C')\\
 & B.
\end{tikzcd}
\end{equation}
\end{figure}
 
By the commutativity of the diagram \ref{diagram_Wcoh}, the image of each of the $\phi^{-1}(Q_t)$ representing $\phi^*(U)$ via composition $p_1\circ\tau:J(C)\to J(C')$ is a point. Hence via the isogeny $\tau$,  $U^{2g(C)-h}$ is identified with the Hodge substructure $U_B^{2\dim(B)-h}$ in the K\"unneth component
$$\rH^{2g(C')}(J(C'))\otimes \rH^{2g(C)-h-2g(C')}(B).$$

Note that $g(C)-g(C')=\dim(B)$. Therefore the above is identified with $\rH^{2\dim(B)-h}(B)$. Hence the composition
$p_2\circ\tau:J(C)\to B$
sends each of the algebraic cycles $\phi^{-1}(Q_t)$ to algebraic cycles in $B$ and their Poincar\'e duals generate the Hodge substructure $U_B^{2\dim(B)-h}$.

Finally, since the Lefschetz standard conjecture is known for abelian varieties (cf. \cite{Kleiman}), the Lefschetz inverse operator $L^{-1}$ is algebraic, and so are its higher powers. In other words, the morphism of Hodge structures $L^{-k}:\rH^{n+k}(A,\bbQ)\to \rH^{n-k}(A,\bbQ)$ is algebraic for each $k$, where $A$ is an abelian variety of dimension $n$. In particular,
$$U_B^h:=L^{-(\dim(B)-h)}(U_B^{2\dim(B)-h})$$
is algebraic. Note that $U_B^h$ is precisely the Hodge substructure identified with the Weil Hodge structure \eqref{eqn_UWeil}. Therefore, the claim follows.
\end{proof}

\section{Relation to Schoen's Setting}\label{sec_cyclic}

We now explain how the results of the previous sections specialize to Schoen's setting \cite{Schoen88}. Let $C\to C'$ be an unramified cyclic cover of smooth projective curves, with Galois group $G=\Z/m\Z$.

Recall that the Prym variety $B$ (cf. \eqref{eqn_Prym}) is the abelian subvariety of $J(C)$ defined as the connected component of the identity in the kernel of the Norm map. In our notation, it is $J(C)_{nt}$, the abelian subvariety of $J(C)$ associated with the $G$-sub-representation $\rH^1(C,\bbQ)_{nt}$. Equivalently, the Prym variety can be written as 
\begin{equation}\label{eqn_PrymNaive}
B={\rm Im}(1-\sigma):J(C)\to J(C),
\end{equation}
where $\sigma:J(C)\to J(C)$ is induced by a generator of $G$.

We recall some standard properties of the Prym variety $B$ (see \cite{Faber}).

\begin{lemma}
    The Prym variety $B$ has the following properties:
\begin{enumerate}
    \item $B$ has dimension $(m-1)(g(C')-1)$.
    \item The generator $\sigma$ above induces an automorphism of $B$ of order $m$. The tangent space at the identity $T_0B$ has a natural induced action of $\Z/m\Z$ for which each eigenvalue $\exp(2\pi i k/m)$ with $1\le k\le m-1$ occurs with multiplicity $g(C')-1$.
\end{enumerate}
\end{lemma}
 The first property follows from Lemma \ref{lem:chimultiplicity}, which shows that all nontrivial characters occur with the same multiplicity. The second property comes from the fact (proven in Lemma \ref{lemma_chi-chi-bar}) that each non-trivial character $\chi$ appears in balanced Hodge type, i.e., half of its contribution lies in $\rH^{1,0}(B,\C)$ and the other half in $\rH^{0,1}(B,\C)$. Alternatively, it is proved in \cite[Lemma 1.6a]{Schoen88} using the holomorphic Lefschetz fixed point theorem.

When $m$ is prime, $\rH^1(B,\bbQ)$ is a free module over $\bbQ(\mu_m)$, and the Prym variety considered in \cite{Schoen88} coincides with \eqref{eqn_Prym}. When $m$ is not prime, however, $B$ decomposes into several isotypic factors. In this case, Schoen's Prym variety \cite[p.24]{Schoen88} appears as an irreducible factor of $B$. We will explain the construction below. 

\begin{definition}
    Let $G=\Z/m\Z$ be a cyclic group. Then we call a character $\chi$ \textit{primitive} if $\chi:G\to \C^*$ is injective.
\end{definition}

 In our notation, Schoen's Prym variety is the abelian subvariety $J(C)_{V_{\textup{prim}}}$ associated with the irreducible rational $G$-representation $V_{\textup{prim}}$, whose complexification consists of the primitive characters. With this notation, the abelian subvariety $B_{\textup{prim}}:=J(C)_{V_{\textup{prim}}}$ of $J(C)$ satisfies the following properties:

\begin{itemize}
    \item The cohomology $\rH^1(B_{\textup{prim}},\bbQ)$ is naturally a vector space over $\bbQ(\mu_m)$, and the tangent space at the origin $T_0B_{\textup{prim}}\cong F^1\rH^1(B_{\textup{prim}},\C)^{\vee}$ decomposes as a direct sum of eigenspaces corresponding to the primitive characters, with each character appearing with multiplicity $h/2$.
    \item $\End_{\bbQ}(B_{\textup{prim}})\supseteq \bbQ(\mu_m)$.
\end{itemize}

In this setting, Schoen showed that each primitive character occurs with multiplicity $h=2g(C')-2$. In fact, Lemma \ref{lem:chimultiplicity} shows that this holds for every non-trivial character, and that the resulting Hodge substructure $$U_{\textup{prim}}:=\bigwedge^h_{\bbQ(\mu_m)}\rH^1(B_{\textup{prim}},\bbQ)\subseteq \rH^{h}(B_{\textup{prim}},\bbQ)$$
consists entirely of Hodge classes. Note it is a summand of \eqref{eqn_intro_Weil-classes}. Finally, Schoen proves the following theorem:

\begin{thm} \cite[Corollary 3.1]{Schoen88} $U_{\textup{prim}}$ is generated by algebraic cycles.
\end{thm}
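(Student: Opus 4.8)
The plan is to recognize that $U_{\textup{prim}}$ is exactly the isotypic component of $U_{Weil}$ attached to the single irreducible rational representation $V_{\textup{prim}}$, and that the entire chain of maps used to prove Theorem~\ref{thm:mainthm2} is $G$-equivariant, hence automatically respects the isotypic decomposition. First I would record the splitting
\[
U_{Weil} = \bigwedge^h_{\bbQ[G]_{nt}} \rH^1(B,\bbQ) = \bigoplus_i \bigwedge^h_{F_i} \rH^1(B,\bbQ)_{V_i},
\]
the sum running over the nontrivial irreducible rational representations $V_i$ of $G=\Z/m\Z$. The cross terms $v^{(i)}\wedge v^{(j)}$ with $i\neq j$ vanish because the central idempotents $e_{V_i}\in\bbQ[G]$ satisfy $e_{V_i}e_{V_j}=0$, so each summand is a genuine Hodge substructure; the summand indexed by $V_{\textup{prim}}$, with field $F=\bbQ(\mu_m)$, is precisely $U_{\textup{prim}}$.

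Next I would isolate the corresponding piece already on the cover $W$, where the group action is most transparent. By Theorem~\ref{thm:mainthm1}(3) the $G$-module $U\subset\rH^h(W,\bbQ)$ is the complement of the trivial summand in the regular representation, and by the remark following Proposition~\ref{prop_Theorem1(2)} it decomposes as $U=\bigoplus_i U_i$ indexed by the nontrivial $V_i$; explicitly $U_{\textup{prim}}\otimes\C$ is spanned by the classes $[\chi_*Q_0]=\sum_{t\in G}\chi(-t)[Q_t]$ with $\chi$ ranging over the primitive characters. The key point is that the projector onto this summand is the central idempotent $e_{V_{\textup{prim}}}\in\bbQ[G]$, which acts on $\rH^*(W)$ through the algebraic correspondence induced by the action of $G$ on $W$ (permuting the disjoint components $Q_t$ of the canonical fibre). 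Since $U$ is generated by algebraic classes (Proposition~\ref{prop_Theorem1(2)}) and $e_{V_{\textup{prim}}}$ is algebraic, the summand $U_{\textup{prim}}\subset\rH^h(W,\bbQ)$ is generated by algebraic classes; equivalently, it is the span of the Galois-stable (hence rational) collection $\{[\chi_*Q_0]\}_{\chi\,\textup{primitive}}$, which lies in $\mathrm{Span}_{\bbQ}\{[Q_t]\}$.

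Finally I would feed $U_{\textup{prim}}$ through the same sequence of morphisms as in the proof of Theorem~\ref{thm:mainthm2}: pull back along $\phi$ to $\rH^h(\Sym^h(C),\bbQ)$, where the analogue of Lemma~\ref{lemma_phi*U} (faithfulness of the $G_0$-action on the top K\"unneth factor, via Schur's lemma) places $\phi^*U_{\textup{prim}}$ inside $\bigwedge^h\rH^1(C,\bbQ)\cong\rH^h(J(C),\bbQ)$; push forward by $\Phi_*$ and invoke the Lefschetz isomorphism; then project via the isogeny $\tau\colon J(C)\sim\prod_i J(C)_{V_i}$. Because every map in this chain is $G$-equivariant and the factors $J(C)_{V_i}$ are pairwise orthogonal, the $V_{\textup{prim}}$-isotypic class is carried to $\rH^{2\dim B_{\textup{prim}}-h}(B_{\textup{prim}})$, and applying the algebraicity of the inverse Lefschetz operator on $B_{\textup{prim}}$ (the standard conjecture for abelian varieties, \cite{Kleiman}) returns it to degree $h$, where it is exactly $U_{\textup{prim}}$.

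I expect the main obstacle to be the bookkeeping at the final projection step: one must check that restricting to $B_{\textup{prim}}$, rather than to the full Prym $B$, still deposits the pushed-forward cycles in a single K\"unneth component $\rH^{2g(C')}(J(C'))\otimes\rH^{\bullet}(B_{\textup{prim}})$, so that the top exterior power recovered at the end is taken over the field $\bbQ(\mu_m)$ and equals $U_{\textup{prim}}$ with no contamination from the other rational summands $V_i$. Given the orthogonality of the $J(C)_{V_i}$ and the $G$-equivariance of $\tau$, this follows from the $V_{\textup{prim}}$-isotypic projection, but it is the point that requires care.
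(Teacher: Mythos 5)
Your proposal is correct and is essentially the argument the paper intends: the paper states this theorem as a citation to Schoen but explicitly remarks (end of Section \ref{sec_proof-thm1} and the introduction) that Theorem \ref{thm:mainthm2} holds with $J(C)_{nt}$ replaced by $J(C)_{V_i}$ ``with essentially the same proof,'' and your isotypic decomposition of $U_{Weil}$ via the central idempotent $e_{V_{\textup{prim}}}$, together with the $G$-equivariance of the chain $\phi^*$, $\Phi_*$, Lefschetz, and $\tau$, is exactly that specialization. Your flagged bookkeeping point at the final projection does go through as you suspect, since $e_{V_{\textup{prim}}}$ and the K\"unneth projector onto the $B_{\textup{prim}}$-factor are both induced by algebraic correspondences (graphs of automorphisms, respectively homomorphisms of abelian varieties), so no contamination from the other rational summands survives.
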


This becomes a consequence of Theorem \ref{thm:mainthm2}.

\bibliographystyle{plain}
\bibliography{bibfile}

@book {ACGH85,
    AUTHOR = {Arbarello, E. and Cornalba, M. and Griffiths, P. A. and
              Harris, J.},
     TITLE = {Geometry of algebraic curves. {V}ol. {I}},
    SERIES = {Grundlehren der mathematischen Wissenschaften [Fundamental
              Principles of Mathematical Sciences]},
    VOLUME = {267},
 PUBLISHER = {Springer-Verlag, New York},
      YEAR = {1985},
     PAGES = {xvi+386},
      ISBN = {0-387-90997-4},
   MRCLASS = {14Hxx (14-02)},
  MRNUMBER = {770932},
MRREVIEWER = {Werner Kleinert},
       DOI = {10.1007/978-1-4757-5323-3},
       URL = {https://doi.org/10.1007/978-1-4757-5323-3},
}

@article {Macdonald,
    AUTHOR = {Macdonald, I. G.},
     TITLE = {Symmetric products of an algebraic curve},
   JOURNAL = {Topology},
  FJOURNAL = {Topology. An International Journal of Mathematics},
    VOLUME = {1},
      YEAR = {1962},
     PAGES = {319--343},
      ISSN = {0040-9383},
   MRCLASS = {14.20},
  MRNUMBER = {151460},
MRREVIEWER = {A. Mattuck},
       DOI = {10.1016/0040-9383(62)90019-8},
       URL = {https://doi-org.ezproxy.lib.purdue.edu/10.1016/0040-9383(62)90019-8},
}

@article {Schoen88,
    AUTHOR = {Schoen, Chad},
     TITLE = {Hodge classes on self-products of a variety with an
              automorphism},
   JOURNAL = {Compositio Math.},
  FJOURNAL = {Compositio Mathematica},
    VOLUME = {65},
      YEAR = {1988},
    NUMBER = {1},
     PAGES = {3--32},
      ISSN = {0010-437X},
   MRCLASS = {14C30 (14K99)},
  MRNUMBER = {930145},
MRREVIEWER = {Gerd Faltings},
       URL = {http://www.numdam.org/item?id=CM_1988__65_1_3_0},
}

@article {Schoen98,
    AUTHOR = {Schoen, Chad},
     TITLE = {Addendum to: ``{H}odge classes on self-products of a variety
              with an automorphism'' [{C}ompositio {M}ath. {\bf 65} (1988),
              no. 1, 3--32; {MR}0930145 (89c:14013)]},
   JOURNAL = {Compositio Math.},
  FJOURNAL = {Compositio Mathematica},
    VOLUME = {114},
      YEAR = {1998},
    NUMBER = {3},
     PAGES = {329--336},
      ISSN = {0010-437X},
   MRCLASS = {14C30 (14K22)},
  MRNUMBER = {1665776},
MRREVIEWER = {Fumio Hazama},
       DOI = {10.1023/A:1000566205021},
       URL = {https://doi.org/10.1023/A:1000566205021},
}

@incollection {Kleiman,
    AUTHOR = {Kleiman, S. L.},
     TITLE = {Algebraic cycles and the {W}eil conjectures},
 BOOKTITLE = {Dix expos\'{e}s sur la cohomologie des sch\'{e}mas},
    SERIES = {Adv. Stud. Pure Math.},
    VOLUME = {3},
     PAGES = {359--386},
 PUBLISHER = {North-Holland, Amsterdam},
      YEAR = {1968},
   MRCLASS = {14C25 (14F20)},
  MRNUMBER = {292838},
MRREVIEWER = {H. Popp},
}

@article {sympi1,
    AUTHOR = {Kallel, Sadok and Taamallah, Walid},
     TITLE = {The geometry and fundamental group of permutation products and
              fat diagonals},
   JOURNAL = {Canad. J. Math.},
  FJOURNAL = {Canadian Journal of Mathematics. Journal Canadien de
              Math\'{e}matiques},
    VOLUME = {65},
      YEAR = {2013},
    NUMBER = {3},
     PAGES = {575--599},
      ISSN = {0008-414X},
   MRCLASS = {14F35},
  MRNUMBER = {3043042},
MRREVIEWER = {Michael Friedman},
       DOI = {10.4153/CJM-2012-028-3},
       URL = {https://doi-org.ezproxy.lib.purdue.edu/10.4153/CJM-2012-028-3},
}

@article {Faber,
    AUTHOR = {Faber, Carel},
     TITLE = {Prym varieties of triple cyclic covers},
   JOURNAL = {Math. Z.},
  FJOURNAL = {Mathematische Zeitschrift},
    VOLUME = {199},
      YEAR = {1988},
    NUMBER = {1},
     PAGES = {61--79},
      ISSN = {0025-5874},
   MRCLASS = {14H40 (14H30 14K05)},
  MRNUMBER = {954752},
MRREVIEWER = {Olivier Debarre},
       DOI = {10.1007/BF01160210},
       URL = {https://doi.org/10.1007/BF01160210},
}

@article {Koike,
    AUTHOR = {Koike, Kenji},
     TITLE = {Algebraicity of some {W}eil {H}odge classes},
   JOURNAL = {Canad. Math. Bull.},
  FJOURNAL = {Canadian Mathematical Bulletin. Bulletin Canadien de
              Math\'{e}matiques},
    VOLUME = {47},
      YEAR = {2004},
    NUMBER = {4},
     PAGES = {566--572},
      ISSN = {0008-4395},
   MRCLASS = {14C30 (11G18 14H40)},
  MRNUMBER = {2099754},
MRREVIEWER = {Fumio Hazama},
       DOI = {10.4153/CMB-2004-055-x},
       URL = {https://doi-org.ezproxy.lib.purdue.edu/10.4153/CMB-2004-055-x},
}

@book{LR,
 author = {Lange, Herbert and Rodr{\'{\i}}guez, Rub{\'{\i}} E.},
 title = {Decomposition of {Jacobians} by {Prym} varieties},
 fseries = {Lecture Notes in Mathematics},
 series = {Lect. Notes Math.},
 issn = {0075-8434},
 volume = {2310},
 isbn = {978-3-031-10144-1; 978-3-031-10145-8},
 year = {2022},
 publisher = {Cham: Springer},
 language = {English},
 doi = {10.1007/978-3-031-10145-8},
 keywords = {14-02,14H40},
 zbMATH = {7566633},
 Zbl = {1514.14001}
}

@article {Markman23,
    AUTHOR = {Markman, Eyal},
     TITLE = {The monodromy of generalized {K}ummer varieties and algebraic
              cycles on their intermediate {J}acobians},
   JOURNAL = {J. Eur. Math. Soc. (JEMS)},
  FJOURNAL = {Journal of the European Mathematical Society (JEMS)},
    VOLUME = {25},
      YEAR = {2023},
    NUMBER = {1},
     PAGES = {231--321},
      ISSN = {1435-9855},
   MRCLASS = {14C25 (14F08 14J42)},
  MRNUMBER = {4556784},
       DOI = {10.4171/jems/1199},
       URL = {https://doi-org.ezproxy.lib.purdue.edu/10.4171/jems/1199},
}

@misc{Markman25,
 author = {Markman, Eyal},
 title = {Cycles on abelian 2n-folds of {Weil} type from secant sheaves on abelian n-folds},
 year = {2025},
 howpublished = {Preprint, {arXiv}:2502.03415 [math.{AG}] (2025)},
 url = {https://arxiv.org/abs/2502.03415},
 arXiv = {arXiv:2502.03415}
}

@book {Serre-CFT,
    AUTHOR = {Serre, Jean-Pierre},
     TITLE = {Algebraic groups and class fields},
    SERIES = {Graduate Texts in Mathematics},
    VOLUME = {117},
      NOTE = {Translated from the French},
 PUBLISHER = {Springer-Verlag, New York},
      YEAR = {1988},
     PAGES = {x+207},
      ISBN = {0-387-96648-X},
   MRCLASS = {14K30 (11G45 11S31 14L40)},
  MRNUMBER = {918564},
       DOI = {10.1007/978-1-4612-1035-1},
       URL = {https://doi.org/10.1007/978-1-4612-1035-1},
}

@article {Schwarzenberger63,
    AUTHOR = {Schwarzenberger, R. L. E.},
     TITLE = {Jacobians and symmetric products},
   JOURNAL = {Illinois J. Math.},
  FJOURNAL = {Illinois Journal of Mathematics},
    VOLUME = {7},
      YEAR = {1963},
     PAGES = {257--268},
      ISSN = {0019-2082},
   MRCLASS = {14.20 (14.51)},
  MRNUMBER = {151459},
MRREVIEWER = {A. Mattuck},
       URL = {http://projecteuclid.org/euclid.ijm/1255644637},
}

@article {MZ,
    AUTHOR = {Moonen, B. J. J. and Zarhin, Yu.\ G.},
     TITLE = {Weil classes on abelian varieties},
   JOURNAL = {J. Reine Angew. Math.},
  FJOURNAL = {Journal f\"ur die Reine und Angewandte Mathematik. [Crelle's
              Journal]},
    VOLUME = {496},
      YEAR = {1998},
     PAGES = {83--92},
      ISSN = {0075-4102,1435-5345},
   MRCLASS = {14C30 (14C25)},
  MRNUMBER = {1605811},
MRREVIEWER = {Fumio\ Hazama},
       DOI = {10.1515/crll.1998.034},
       URL = {https://doi.org/10.1515/crll.1998.034},
}

\end{document}